\documentclass[a4paper,11pt]{amsart}

\usepackage[margin=0.95in]{geometry}

\usepackage{amsmath}
\usepackage{amssymb}
\usepackage{xcolor}
\usepackage{bbm}
\usepackage{graphicx}
\usepackage{verbatim}
\usepackage{algorithm}

\usepackage[T1]{fontenc}
\usepackage[ansinew]{inputenc}
\usepackage[german, french,USenglish]{babel}

\makeatletter

\theoremstyle{plain}

\newtheorem{theorem}{Theorem}

\newtheorem{proposition}[theorem]{Proposition}

\newtheorem{corollary}[theorem]{Corollary}

\theoremstyle{remark}

\newcommand{\R}{\mathbb{R}}

\renewcommand{\epsilon}{\varepsilon}


\newtheorem{assumption}{Assumption}

\def\Aset{\mathsf{A}}


\usepackage[colorinlistoftodos,bordercolor=orange,backgroundcolor=orange!20,linecolor=orange,textsize=scriptsize]{todonotes}

\numberwithin{equation}{section}

\def\JELname{{\bfseries JEL Classification}\enspace}
      \def\JEL#1{\par\addvspace\medskipamount{\rightskip=0pt plus1cm
      \def\and{\ifhmode\unskip\nobreak\fi\ $\cdot$
      }\noindent\JELname\ignorespaces#1\par}}

\makeatletter
\newcommand{\leqnomode}{\tagsleft@true\let\veqno\@@leqno}
\newcommand{\reqnomode}{\tagsleft@false\let\veqno\@@eqno}
\makeatother

\begin{document}
\title[Mesh algorithms  for MDP]
{
Weighted mesh algorithms for general Markov decision processes: Convergence and tractability }
\author[D.~Belomestny]{Denis Belomestny$^{1}$}
\address{$^1$Faculty of Mathematics\\
Duisburg-Essen University\\
Thea-Leymann-Str.~9\\
D-45127 Essen\\
Germany}
\email{denis.belomestny@uni-due.de, veronika.zorina@uni-due.de}

\author[J.~Schoenmakers]{John Schoenmakers$^{2}$}
\address{$^2$Weierstrass Institute for Applied Analysis and Stochastics \\
\mbox{Mohrenstr.~39} \\
10117 Berlin \\
Germany}
\email{schoenma@wias-berlin.de}

\author[V.~Zorina]{ Veronika Zorina$^{1}$}

\keywords{Markov decision processes, mesh method, tractability}
\subjclass[2010]{90C40\and 65C05\and 62G08}

\date{}
\maketitle
\begin{abstract}
We introduce a mesh-type approach for tackling discrete-time, finite-horizon Markov Decision Processes (MDPs) characterized by state and action spaces that are general, encompassing both finite and infinite (yet suitably regular) subsets of Euclidean space. In particular, 
for bounded state and action spaces,  
our algorithm achieves a computational complexity that is tractable in the sense 
of Novak \& Wo\'{z}niakowski \cite{NovakWozniakowski2008}, and is polynomial in the time horizon.
For unbounded state space the algorithm 
is ``semi-tractable'' in the sense that the
complexity is proportional to 
$\epsilon^{-c}$ 
with some dimension independent $c\geq2$,
for 
achieving an accuracy $\epsilon$, and  polynomial 
in the time horizon with degree linear in the underlying dimension. As such the proposed approach has some flavor 
of the randomization method by Rust~\cite{rust1997using} which deals with infinite horizon MDPs and uniform sampling in compact state space. However, the present approach is essentially different due to the finite horizon and
a simulation procedure due to general transition distributions, and more general in the sense that it encompasses  unbounded state space. 
To demonstrate the effectiveness of our algorithm, we provide illustrations based on Linear-Quadratic Gaussian (LQG) control problems.
\end{abstract}

\section{Introduction}

Markov decision processes (MDPs) provide a general framework for modeling sequential decision-making under uncertainty. A large number of practical problems from various areas such as economics, finance, and machine learning can be viewed as MDPs.
{For a classical reference
we refer to \cite{puterman2014markov}, and for MDPs with application to finance, see
\cite{BaRi}.}
The aim is usually to find an optimal policy that maximizes the expected accumulated rewards (or minimizes the expected accumulated costs).
{In principle, these Markov decision  problems can be  solved by an approximate dynamic programming approach, see \cite{puterman2014markov};}
 however, in practice, this approach suffers from the so-called ``curse of dimensionality'' and the ``curse of horizon''  meaning that the complexity (running time) of the program increases exponentially in the dimension of the problem (dimensions of the state and action spaces) and the horizon (or effective horizon for discounted infinite horizon MDPs).  Traditional dynamic programming (DP) algorithms, such as value- or policy-iteration, exhibit exponential scaling with MDP size, even when coupled with advanced multigrid algorithms, see \cite{BelSch2023MDP}  for recent review of various approximative DP algorithms for general state and action spaces.
 Furthermore, the curse of dimensionality can be considered   as a lower bound on the complexity of any MDP,
 not confined to any specific algorithm, as evidenced by Chow and Tsitsiklis \cite{chow1989complexity}.
 \par
The problem of solving MDPs without curse of dimensionality  attracted a lot of attention in the literature.  The first work in this direction was Rust~\cite{rust1997using} where the author proposed 
in an 
infinite horizon setting a weighted mesh  algorithm 
with complexity proportional to $\epsilon^{-4}$ for a target accuracy $\epsilon$ and a polynomial in the underlying dimension. 
Another approach based on Monte Carlo tree search and sparse sampling  was suggested in Kearns et al.~\cite{kearns2002sparse}. In particular, the authors in \cite{kearns2002sparse} demonstrated that a specific online tree-building algorithm successfully circumvents the curse of dimensionality in discounted MDPs. This achievement has been further extended to partially observable MDPs (POMDPs) by the same authors in \cite{kearns1999approximate}. The bounds established in these two papers remain independent of the dimension of the state space but exhibit exponential scaling with $1/(1-\gamma)$, representing the effective horizon-time, where $\gamma$ is the discount factor of the MDP. Moreover, the complexity  depends on the number of actions polynomially with power again proportional to the effective horizon-time.    A recent work \cite{beck2023nonlinear} proposed a nonlinear Multilevel Monte Carlo approach to solve infinite horizon MDPs without curse of dimensionality. Note that  the complexity estimates in \cite{beck2023nonlinear} is of order $\epsilon^{-c}$ with $c$  depending on the effective horizon-time. Moreover, the  number of actions is assumed to be finite. 
Let us stress that the setting of finite horizon MDPs is essentially different from the infinite horizon one where  we need to solve a fixed  point problem. Finite horizon setting requires a backward dynamic programming procedure and  simulation of the paths of the underlying Markov process instead of the one step transitions as in the infinite horizon MDPs. As a result, the convergence analysis of the algorithms in finite horizon MDPs becomes much more intricate.

\par
In this paper, we present a novel approach for addressing high-dimensional finite horizon Markov Decision Processes (MDPs) using a weighted mesh approach. This methodology shares conceptual similarities with the approach proposed by Rust~\cite{rust1997using} (see also \cite{szepesvari2001efficient}). However, it's essential to note that the work by Rust focuses on infinite horizon discounted MDPs, introducing a crucial distinction between the two settings.
Unlike Rust, who can independently sample at each step of the iteration procedure, our approach involves drawing trajectories of the underlying state process and proceeding backwardly. This results in a more intricate structure of weights and their dependence on controls. Additionally, Rust's work imposes rather restrictive assumptions on the underlying MDP, assuming, for instance, a compact state space, finite action space 
and  transition densities uniformly bounded away from zero. These assumptions exclude consideration of many interesting cases, such as Gaussian  processes with non-compact supports (refer to \cite{bray2022comment} for a discussion on this and related issues). In our work, we allow for non-compact state spaces, continuous action spaces and general classes of transition densities. Also note that the proposed methodology essentially differs from the one adopted in \cite{dufour2015approximation} for infinite horizon MDPs, where the authors use simple empirical means with respect to a control-independent reference  measure and derive bounds conditional on a ``good'' event where the denominator of the estimator is bounded away from zero. In contrast, we derive convergence rates in the \( L_1 \) norm with explicit dependence on the complexity of the state and action spaces.
\par
Thus, paper's primary contribution is the introduction of a new weighted mesh algorithm designed for a broad range of finite horizon Markov Decision Processes (MDPs). We have also conducted a thorough  complexity analysis of this algorithm. Our findings reveal that this algorithm is capable of efficiently solving a wide spectrum of finite horizon MDPs, including those with non-compact state/action spaces that are subsets of \(\mathbb{R}^d\) and feature general transition densities. Significantly, the computational complexity of our algorithm, denoted as \(\mathcal{C}(\epsilon, d)\), demonstrates a polynomial dependence on the horizon length, ensuring \(\epsilon\)-accuracy in approximating the corresponding value functions at a given point. Moreover, it holds  
\[
\lim_{d\rightarrow\infty}\lim_{\epsilon\searrow0}\frac{\log\mathcal{C}%
\left(  \epsilon,d\right)  }{f(d)\log(1/\epsilon)}=0
\]
for any $f$ with arbitrary slow convergence to infinity  as $d\to\infty$.  This type of dependency on $d$ and $\epsilon$ can be characterized as ``semi-tractable'' or indicative of a ``weak curse of dimensionality.'' To our knowledge, this marks the first instance in the  literature where a general finite horizon Markov Decision Process (MDP) is approximated with an algorithm that exhibits at most a ``semi-tractable'' level of complexity. 
\par
The paper is organized as follows. The basic setup of the Markov Decision Process and the well-known representations
for its maximal expected reward is given in Section~\ref{setup}.
Appendix~A introduces some auxiliary notions
needed to formulate an auxiliary result in Appendix~B
stemming from the theory of empirical processes.

\section{Setup and basic properties of the Markov Decision Process}\label{setup}

We consider the discrete time finite horizon Markov Decision Process (MDP), given by the tuple
\begin{equation*}
\mathcal{M}=(\mathsf{S},\mathsf{A},(P_{h})_{h\in]H]},(R_{h})_{h\in[H[},F,H),
\end{equation*}
made up by the following items:
\begin{itemize}
\item a measurable state space $(\mathsf{S},\mathcal{S},\rho_\mathsf{S})$%
;
\item a measurable action space $(\mathsf{A},\mathcal{A},\rho_\mathsf{A})$
;
\item an integer $H$ which defines the horizon of the problem;
\item for each $h\in ]H],$ with $]H]:=\{1,\ldots,H\}$\footnote{We further write $[H]:=\{0,1,\ldots,H\}$ etc.}, a time dependent transition function $P_{h}:$ $\mathsf{S}\times\mathsf{A}\to\mathcal{P}(\mathsf{S})$
where $\mathcal{P}(\mathsf{S})$ is the space of probability measures on
$(\mathsf{S},\mathcal{S})$;
\item a time dependent reward function $R_{h}:$ $\mathsf{S}\times\mathsf{A}\to\mathbb{R},$
where $R_{h}(x,a)$ is the immediate reward associated with taking action
$a\in\mathsf{A}$ in state $x\in\mathsf{S}$ at time step $h\in[H[$;
\item a terminal reward $F:$ $\mathsf{S}\to\mathbb{R}$.
\end{itemize}
Introduce a filtered probability space $\mathfrak{S}:=\bigl(\Omega
,\mathcal{F},(\mathcal{F}_{t})_{t\in\lbrack H]},\mathbb{P}\bigr).$ 
For a fixed policy $\boldsymbol{\pi}=(\pi_{0},\ldots,\pi_{H-1})$ with $\pi
_{t}:$ $\mathsf{S}\rightarrow\mathcal{P}(\Aset),$ we consider an adapted
controlled process $\bigl(S_{t},A_{t}\bigr)_{t=h,\ldots,H}$ on $\mathfrak{S}$
satisfying $S_{0}\in\mathsf{S},$ $A_{0}\sim\pi_{0}(S_0),$ and
\begin{eqnarray}
\label{eq:chain0}
S_{t+1}\sim P_{t+1}(\left.  \cdot\right\vert S_{t},A_{t}),\quad A_{t}\sim
\pi_{t}(S_{t}),\quad t=0,\ldots,H-1.
\end{eqnarray}
Henceforth we denote by $a_{<h}$ the deterministic vector of actions $a_{<h}%
=(a_{0},\ldots,a_{h-1})\in\mathsf{A}^{h},$ similarly $a_{\leq h}$ etc.,
\begin{assumption}\label{eq:iterfunc-repr}
In the sequel we  assume that
chain $(S_{t}(a_{<t}))$  comes from  the system of so-called \textit{random iterative
functions}:
\begin{equation*}
S_{t}=\mathcal{K}_{t}(S_{t-1},a_{t-1}, \varepsilon
_{t}),\quad t\in]H], %
\end{equation*}
where $\mathcal{K}_{t}:$ $\mathsf{S}\times\mathsf{A}\times\mathsf{E}%
\rightarrow\mathsf{S}$ is a measurable map with $\mathsf{E}$ being a
measurable space, and $(\varepsilon_{t},t\in]H])$ is an i.i.d. sequence of
$\mathsf{E}$-valued random variables defined on a probability space
$(\Omega,\mathcal{F},\mathrm{P}).$ 
\end{assumption}
Let us note that Assumption~\ref{eq:iterfunc-repr} is included for clarity, but is not a real assumption in fact. It holds for any controlled Markov model by stochastic realization arguments, see e.g. \cite[Lemma 1.2]{Bor}, \cite[Lemma 3.1]{GikSko}. 
The expected reward of this  MDP due to the chosen policy $\boldsymbol{\pi}$ is given by
\[
V_{0}^{\boldsymbol{\pi}}(x):=\mathbb{E}_{\boldsymbol{\pi},x}\left[
\sum_{t=0}^{H-1}R_{t}(S_{t},A_{t})+F(S_{H})\right], \quad x\in \mathsf{S}
\]
where \(\mathbb{E}_{\boldsymbol{\pi},x}\) stands for expectation induced by the policy \(\boldsymbol{\pi}\) and transition kernels \(P_{t},\) \(t\in [H],\)  conditional on the event \(S_0=x.\)
The
goal of the Markov decision problem is to determine the maximal expected
reward:
\begin{equation}
V_{0}^{\boldsymbol{\star}}(x_0):=\sup_{\boldsymbol{\pi}\in \Pi}\mathbb{E}%
_{\boldsymbol{\pi},x_0}\left[  \sum_{t=0}^{H-1}R_{t}(S_{t},A_{t})+F(S_{H})\right]
=\sup_{\boldsymbol{\pi}\in \Pi}V_{0}^{\boldsymbol{\pi}}(x_{0})\label{Vopt}%
\end{equation}
where \(\Pi\) is a set of all measurable mappings \((\mathsf{S}\rightarrow\mathcal{P}(\Aset))^{\otimes H}.
\)
Let us introduce for a generic time $h\in\left[H\right],$ the value
function due to the policy $\boldsymbol{\pi},$
\begin{align*}
V_{h}^{\boldsymbol{\pi}}(x)  & :=\mathbb{E}%
_{\boldsymbol{\pi},x}\left[
\left.  \sum_{t=h}^{H-1}R_{t}(S_{t},A_{t})+F(S_{H})\right\vert S_{h}=x\right]
,\text{ \ \ }x\in\mathsf{S}.
\end{align*}
Furthermore, let
\begin{equation}
\label{eq:optim_vh}
V_{h}^{\boldsymbol{\star}}(x)   :=\sup_{\boldsymbol{\pi}}%
V_{h}^{\boldsymbol{\pi}}(x)
\end{equation}
be the optimal value function at $h\in\left[  H\right]$. 
\begin{theorem}\label{Bellman}
Assume that for all $t\in[H[,$ the mappings $\mathcal{K}_{t}(\cdot,\cdot,\varepsilon)$ for any fixed $\varepsilon\in \mathsf{E},$ $R_t$ and $F$ are  uniformly  bounded and continuous functions on \(\mathsf{S}\times \mathsf{A} \) and \(\mathsf{S},\) respectively.  For  any fixed $x\in\mathsf{S},$  it then holds  $V_{H}
^{\boldsymbol{\star}}(x)=F(x),$ and
\begin{equation}
V_{h}^{\boldsymbol{\star}}(x)=\sup_{a\in A}\left(  R_{h}(x,a)+\mathbb{E}%
_{S_{h+1}\sim P_{h+1}(\cdot|x,a)}\left[  V_{h+1}^{\boldsymbol{\star}}%
(S_{h+1})\right]  \right)  ,\quad h=H-1,\ldots,0.\label{eq:bellman-q}%
\end{equation}
Moreover, the 
supremum in (\ref{eq:bellman-q}) is attained at some deterministic optimal action
$a^{\boldsymbol{\star}}=\pi_{h}^{\boldsymbol{\star}}(x)$. That is, there exists an optimal policy  solving \eqref{eq:optim_vh} which depends on $S_{t}$ in a deterministic
way. In this case, we shall write $\boldsymbol{\pi}^{\boldsymbol{\star}}%
=(\pi_{t}^{\boldsymbol{\star}}(S_{t}))$ for some mappings $\pi_{t}^{\star}:$
$\mathsf{S}\rightarrow\Aset$.
\end{theorem}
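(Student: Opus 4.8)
The plan is to argue by backward induction on the time index $h$, descending from $h=H$ to $h=0$, carrying along the statement that $V_{h}^{\boldsymbol{\star}}$ is a bounded, continuous function on $\mathsf{S}$ and that the optimum over policies from time $h$ onward is attained by a deterministic Markovian policy. The base case $h=H$ is immediate: no running rewards remain, so $V_{H}^{\boldsymbol{\pi}}(x)=F(x)$ for every $\boldsymbol{\pi}$, whence $V_{H}^{\boldsymbol{\star}}(x)=F(x)$, which is bounded and continuous by hypothesis.

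For the inductive step, suppose the claim holds at time $h+1$. First I would peel off the first decision using the Markov property: for a policy $\boldsymbol{\pi}=(\pi_h,\pi_{h+1},\dots)$ the tower property gives
\[
V_{h}^{\boldsymbol{\pi}}(x)=\int_{\mathsf{A}}\Bigl(R_{h}(x,a)+\mathbb{E}_{S_{h+1}\sim P_{h+1}(\cdot|x,a)}\bigl[V_{h+1}^{\boldsymbol{\pi}}(S_{h+1})\bigr]\Bigr)\,\pi_{h}(da\,|\,x).
\]
Bounding $V_{h+1}^{\boldsymbol{\pi}}\le V_{h+1}^{\boldsymbol{\star}}$ and maximizing yields the upper bound $V_h^{\boldsymbol{\star}}(x)\le\sup_{a}q_h(x,a)$, where $q_{h}(x,a):=R_{h}(x,a)+\mathbb{E}_{S_{h+1}\sim P_{h+1}(\cdot|x,a)}[V_{h+1}^{\boldsymbol{\star}}(S_{h+1})]$; conversely, combining a maximizing action at time $h$ with the optimal continuation policy furnished by the induction hypothesis attains $\sup_a q_h(x,a)$, giving the matching lower bound and hence \eqref{eq:bellman-q}. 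Since the integral is affine in the randomization $\pi_h(\cdot|x)\in\mathcal{P}(\mathsf{A})$, its supremum over $\mathcal{P}(\mathsf{A})$ equals $\sup_{a\in\mathsf{A}}q_h(x,a)$, so randomization is never strictly beneficial and attention may be restricted to deterministic actions.

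Next I would establish the regularity needed to attain the supremum and to close the induction. Using the random iterative function representation of Assumption~\ref{eq:iterfunc-repr}, I rewrite the expectation as $\mathbb{E}_{\varepsilon}[V_{h+1}^{\boldsymbol{\star}}(\mathcal{K}_{h+1}(x,a,\varepsilon))]$; since $\mathcal{K}_{h+1}(\cdot,\cdot,\varepsilon)$ is continuous and $V_{h+1}^{\boldsymbol{\star}}$ is bounded and continuous by the induction hypothesis, dominated convergence yields joint continuity of $(x,a)\mapsto q_{h}(x,a)$, while boundedness is inherited from the uniform bounds on $R_{h}$ and $V_{h+1}^{\boldsymbol{\star}}$. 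With $\mathsf{A}$ compact (which is what underlies attainment), the Weierstrass theorem guarantees that $\sup_{a\in\mathsf{A}}q_{h}(x,a)$ is attained for each fixed $x$. To obtain an optimal policy as a \emph{measurable} map $\pi_{h}^{\boldsymbol{\star}}:\mathsf{S}\to\mathsf{A}$, I invoke Berge's maximum theorem, which gives continuity of $x\mapsto V_{h}^{\boldsymbol{\star}}(x)=\sup_{a}q_{h}(x,a)$ together with upper hemicontinuity of the argmax correspondence, followed by a measurable selection theorem (e.g.\ Kuratowski--Ryll-Nardzewski) to extract a measurable selector $a^{\boldsymbol{\star}}=\pi_{h}^{\boldsymbol{\star}}(x)$. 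Boundedness and continuity of $V_{h}^{\boldsymbol{\star}}$ then complete the induction.

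The routine parts are the tower-property decomposition and the affineness argument reducing randomized to deterministic controls. The main obstacle is the regularity analysis at the inductive step: propagating continuity of the value function backward in time --- which hinges on combining the continuity of $\mathcal{K}_{h+1}$ with dominated convergence under the random iterative function representation --- and then producing a \emph{measurable} optimal selector from the merely pointwise attainment of the supremum. The latter is precisely where the maximum theorem and a measurable selection result are indispensable; without the joint continuity of $q_h$ and the compactness of $\mathsf{A}$, neither attainment nor measurability of $\pi_h^{\boldsymbol{\star}}$ could be guaranteed.
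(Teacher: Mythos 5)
Your proof is correct, and it is essentially the canonical argument: the paper itself gives no proof of Theorem~\ref{Bellman}, deferring to \cite[Section 2.3]{BaRi}, and your backward induction --- tower-property decomposition, affineness in $\pi_h(\cdot|x)$ to discard randomization, continuity of $q_h$ via dominated convergence through the iterative-function representation of Assumption~\ref{eq:iterfunc-repr}, then Berge's maximum theorem plus a measurable selection theorem --- is exactly the standard route taken there. The one point worth noting is that attainment of the supremum genuinely requires compactness of $\mathsf{A}$ (or an upper-semicontinuity/coercivity substitute), which the theorem's hypotheses as literally stated omit; you correctly flag and supply this assumption, and it is consistent with the paper, which imposes compactness of $\mathsf{A}$ later in Assumption~\ref{ass:sets}.
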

Note that Theorem~\ref{Bellman} holds under much weaker conditions, 
see e.g. \cite[Section 2.3]{BaRi}.  

Let us further introduce recursively $Q_{H}^{\star}(x,a)=F(x),$ and
\[
Q_{h}^{\star}(x,a):= R_{h}(x,a)+\mathbb{E}_{S_{h+1}\sim P_{h+1}%
(\cdot|x,a)}\left[  \sup_{a^{\prime}\in A}Q_{h+1}^{\boldsymbol{\star}}%
(S_{h+1},a^{\prime})\right], \quad h=H-1,\ldots,0.
\]
Then $Q_{h}^{\star}(x,a)$ is called the \textit{optimal state-action} function ($Q$-function) and one
thus has%
\[
V_{h}^{\boldsymbol{\star}}(x)=\sup_{a\in A}Q_{h}^{\star}(x,a),\text{ \ \ }%
\pi_{h}^{\boldsymbol{\star}}(x)\in\arg\max_{a\in\mathsf{A}}Q_{h}^{\star
}(x,a),\text{ \ \ }\quad h\in\lbrack H],
\]
provided the supremum is attainable in $\mathsf{A}$.
Finally, note that the optimal value function $V^{\star}$ satisfies due to Theorem~\ref{Bellman},
\[
V_{h}^{\star}(x)=T_{h}V_{h+1}^{\star}(x),\text{ \ \ }h\in\lbrack H[,
\]
where $T_{h}V(x):=\sup_{a\in A}\left(  R_{h}(x,a)+P_{h+1}^{a}V(x)\right)  $
with $P_{h+1}^{a}V(x):=\mathbb{E}_{S_{h+1}\sim P_{h+1}(\cdot
|x,a)}\left[  V(S_{h+1})\right].$

\begin{assumption}\label{defS}
For definiteness we henceforth assume that the state space $\mathsf{S}\subset\mathbb{R}^d$ for some
natural $d$,
and that the distribution \(P_{h+1}(dz
|x,a)\) possesses a Lebesgue density \(p_{h+1}^{a}(dz|x)\) for $x\in\mathsf{S}$ and  $a\in \mathsf{A}$. 
\end{assumption}
Let us
denote with $S_{h}\equiv(S_{h}(a_{<h}))_{h\in\{0,\ldots,H\}}$ the process
defined (in distribution) via
\begin{eqnarray}
\label{eq:chain}
S_{0}=x_0,\quad S_{h+1}\equiv S_{h+1}(a_{< h+1})\sim P_{h+1}(\cdot
|S_{h},a_{h}),\quad h=0,\ldots,H-1.
\end{eqnarray}
Then by Assumption~\ref{defS} the (unconditional) density of  $S_{h}$ denoted by $p_{h}^{a_{<h}}$  fulfills 
\begin{align}
p_{0}^{a_{<0}}(y)\,    =\delta_{x_{0}}(y), \quad
p_{h+1}^{a_{<h+1}}(y)\,    =\int_{\mathsf{S}}p_{h}^{a_{<h}}%
(z)p_{h+1}^{a_{h}}(y|z)\,dz,\quad  h\in\lbrack H[.\nonumber
\end{align}
\section{Algorithm}
\label{algB} 
Fix some ``representative''
 controls $b_{0},\ldots,b_{H-1}\in\mathsf{A}$ and simulate independently  for $n=1,\ldots,N,$ the
chains $\left(S_{h}^{(n)}= S_{h}^{(n)}(b_{<h})\right)  _{h\in\lbrack H]}$ according to
(\ref{eq:chain}) all starting from a fixed point \(x_0\in \mathsf{S}\). Fix some bounded function \(f\) on \(\mathsf{S}\) and consider the following approximation
\begin{align}
\mathbb{E}_{S_{h+1}\sim P_{h+1}(\cdot|x,a)}\left[  f
(S_{h+1})\right]    &  \approx \mathcal{E}_{h,N}(x,a;f):=
\sum_{n=1}^{N}f(S_{h+1}^{(n)})\,w_{h,n,N}%
(x,a) \text{ \ \ \ with}\nonumber\\
w_{h,n,N}(x,a)  &  :=\frac{p_{h+1}^{a}(S_{h+1}^{(n)}|x)}{\sum_{k=1,\,k\neq n}^{N}%
p_{h+1}^{b_{h}}(S_{h+1}^{(n)}|S_{h}^{(k)})}\Big/\sum_{n^{\prime}%
=1}^{N}\frac{p_{h+1}^{a}(S_{h+1}^{(n^{\prime})}|x)}{\sum_{k^{\prime
}=1,\,k^{\prime}\neq n^{\prime}}^{N}p_{h+1}^{b_{h}}(S_{h+1}^{(n^{\prime)
}}|S_{h}^{(k^{\prime})})}\label{appr}
\end{align}
for any \((x,a)\in (\mathsf{S}\times \mathsf{A})\) where by definition $0/0=0.$  Note that the weights $w_{l,n,N}$
satisfy  $w_{l,n,N}\geq0$ and $\sum_{n=1}^{N}w_{l,n,N}=1.$ 
The latter feature is crucial as it implies a contraction property of the corresponding approximated Bellman operator. 
A heuristic rationale 
behind (\ref{appr}) is given in Appendix~\ref{heurw}.
Let us observe that the weights in (\ref{appr}) are fundamentally different from the simpler weights used in \cite{rust1997using} and \cite{dufour2015approximation}. A similar choice cannot be applied here because of the finite horizon setting and the absence  of a control-independent reference measure in our  context. Therefore, we propose the following (pseudo) weighted stochastic mesh algorithm.

\begin{algorithm}
\begin{itemize}
\item Initialization: $\overline{V}_{H}(S_{H}^{(n)})=F(S_{H}^{(n)}),$
$n=1,\ldots,N.$

\item Backward step: Suppose that for $h+1\leq H,$ $\overline{V}_{h+1}%
(S_{h+1}^{(n)})$ is constructed for $n=1,\ldots,N.$ Then we define
\begin{gather}
\label{eq:approx-value}
\overline{V}_{h}(S_{h}^{(r)})=
\sup_{a\in\mathsf{A}}\left(  R_{h}(  S_{h}^{(r)},a)
+\mathcal{E}_{h,N}(S_{h}^{(r)},a;\overline{V}_{h+1})\right) 
\end{gather}
for $r=1,\ldots,N.$
\item Output: \(\overline{V}_{0}(x_0).\)
\end{itemize}
\end{algorithm}
Note that the above algorithm depends on the choice of controls $b_{0},\ldots,b_{H-1}\in\mathsf{A}$. However, as we show in the next section, this choice of controls doesn't influence the convergence rates of the algorithm under proper assumptions.
\section{Convergence analysis}
In this section we study the convergence of the stochastic mesh algorithm presented in Section~\ref{algB}.  First we assume that the state/action space is compact and then extend our results to a noncompact case. Throughout this section we make the following assumption. 
\begin{assumption}
\label{ass:sets} Assume that  \(\mathsf{S}\subset \mathbb{R}^{d_\mathsf{S}}\) and \(\mathsf{A}\subset \mathbb{R}^{d_\mathsf{A}}\) for some natural numbers \(d_\mathsf{S}\) and \(d_\mathsf{A}.\)  Moreover, \(\mathsf{S}\) and \(\mathsf{A}\) are compact with
(finite)
diameters \(\mathrm{diam}(\mathsf{S})\) and \(\mathrm{diam}(\mathsf{A}),\) respectively.
\end{assumption}
\begin{assumption}
\label{ass: reg} There exist  constants $\delta>0,$ \(\Lambda>0\) and a function \(\mathcal{L}:\) \(\mathsf{S}\to \mathbb{R}_+\) such that the one-step
transition densities \((p_{h}^{a}(y|x),\,h\in [H])\) satisfy
\[
0<\delta\leq p_{h}^{a}(y|x)\leq\Lambda,\quad |p_{h}^{a_1}(y|x_1)-p_{h}^{a_2}(y|x_2)|\leq \mathcal{L}(y) (|x_1-x_2|+\rho_{\mathsf{A}}(a_1,a_2) )
\]
for all \(x,x_1,x_2,y\in \mathsf{S},\) \(a,a_1,a_2\in \mathsf{A}\) and \(h=1,\ldots,H,\) where  \(\max\{\|\mathcal{L}\|_{L^1(\mathsf{S})},\|\mathcal{L}\|_{L^\infty(\mathsf{S})}\}\leq L.\) Moreover
\[
\max\{|R_{h}(s,a)|,|F(s)|\}\leq G,\quad (s,a)\in \mathsf{S}\times \mathsf{A},\quad h\in [H[.
\]
\end{assumption}
Under these assumptions, we can prove the following bound.
\begin{theorem}\label{thmcomp}
With respect to the probability space supporting 
the simulations in algorithm (\ref{eq:approx-value}), it  holds that%
\begin{align*}
\mathbb{E}\left[  \left\vert \overline{V}_{0}(x_0)-V_{0}^{\star}(x_0)\right\vert
\right]   &  \lesssim\frac{H^2
G}{\sqrt{N}}\left(  \frac{L\, \mathrm{DI}(\mathsf{S}%
\times\mathsf{A}) +   L\,\mathrm{diam}(\mathsf{S})\mathrm{diam}(\mathsf{A})   +  \Lambda }{\delta}+\frac{\Lambda^2}{\delta^{2}}\right)  
\end{align*}
for all $N>N_{0}$ with $N_{0}$ large enough  and $\lesssim$ denoting $\leq$ up to
some (absolute)  proportionality constant. 
\end{theorem}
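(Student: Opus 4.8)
The plan is to propagate a single per-step error through the backward recursion using the averaging (contraction) property of the weighted operator, and then to bound the resulting local one-step error of the self-normalized estimator uniformly over the action space by an empirical-process argument.

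First I would set up the telescoping decomposition. Writing $\overline{T}_{h,N}V(x):=\sup_{a\in\mathsf{A}}(R_h(x,a)+\mathcal{E}_{h,N}(x,a;V))$ for the approximate Bellman operator and recalling from Theorem~\ref{Bellman} that $V_h^\star=T_hV_{h+1}^\star$, I would split, for each mesh point $x=S_h^{(r)}$,
\begin{align*}
\overline{V}_h(S_h^{(r)})-V_h^\star(S_h^{(r)})
&=\bigl(\overline{T}_{h,N}\overline{V}_{h+1}(S_h^{(r)})-\overline{T}_{h,N}V_{h+1}^\star(S_h^{(r)})\bigr)\\
&\quad+\bigl(\overline{T}_{h,N}V_{h+1}^\star(S_h^{(r)})-T_hV_{h+1}^\star(S_h^{(r)})\bigr).
\end{align*}
Using $|\sup_a g-\sup_a h|\le\sup_a|g-h|$ together with $w_{h,n,N}\ge0$ and $\sum_n w_{h,n,N}=1$, the first bracket is bounded in absolute value by $\Delta_{h+1}:=\max_n|\overline{V}_{h+1}(S_{h+1}^{(n)})-V_{h+1}^\star(S_{h+1}^{(n)})|$, which is exactly the contraction property announced after~\eqref{appr}. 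Hence, setting $\mathcal{L}_h:=\sup_{x\in\mathsf{S},\,a\in\mathsf{A}}|\mathcal{E}_{h,N}(x,a;V_{h+1}^\star)-P_{h+1}^aV_{h+1}^\star(x)|$ and noting $\Delta_H=0$ (the initialization is exact), one gets $\Delta_0\le\sum_{h=0}^{H-1}\mathcal{L}_h$; since $x_0$ is a degenerate mesh point this controls the output error, and taking expectations reduces everything to bounding $\mathbb{E}[\mathcal{L}_h]$.

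Second, I would analyze the local error $\mathcal{L}_h$, the heart of the argument. The estimator is a self-normalized importance-sampling ratio: up to a common factor the $n$-th unnormalized weight is $p_{h+1}^a(S_{h+1}^{(n)}|x)\big/\frac{1}{N-1}\sum_{k\neq n}p_{h+1}^{b_h}(S_{h+1}^{(n)}|S_h^{(k)})$, whose leave-one-out denominator is, conditionally on $S_{h+1}^{(n)}$, an unbiased estimator of the sampling marginal $q_{h+1}(y)=\int p_h^{b_{<h}}(z)p_{h+1}^{b_h}(y|z)\,dz$. With the idealized likelihood ratio $\ell_n:=p_{h+1}^a(S_{h+1}^{(n)}|x)/q_{h+1}(S_{h+1}^{(n)})$ one has $\mathbb{E}[\ell_n f(S_{h+1}^{(n)})]=P_{h+1}^af(x)$ and $\mathbb{E}[\ell_n]=1$, so the estimator is a consistent ratio estimator. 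I would decompose its deviation into (i) the denominator-estimation error (replacing $\frac{1}{N-1}\sum_{k\neq n}p_{h+1}^{b_h}$ by $q_{h+1}$) and (ii) the Monte-Carlo error of numerator and self-normalizing denominator around their means. The bounds $\delta\le p_{h+1}^a\le\Lambda$ from Assumption~\ref{ass: reg} keep all ratios in $[\delta/\Lambda,\Lambda/\delta]$ and all densities bounded, making both contributions $O(N^{-1/2})$; propagating the relative error of the denominator through the division accounts for the $\Lambda^2/\delta^2$ term, while the variance of the importance weights gives the $\Lambda/\delta$ contribution, all carrying a factor $\|V_{h+1}^\star\|_\infty\le(H-h)G\le HG$.

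Third, I would upgrade this pointwise estimate to a bound uniform in $(x,a)$, which is where the empirical-process result of Appendix~B enters and where $\mathrm{DI}(\mathsf{S}\times\mathsf{A})$ appears. The Lipschitz estimate $|p_h^{a_1}(y|x_1)-p_h^{a_2}(y|x_2)|\le\mathcal{L}(y)(|x_1-x_2|+\rho_\mathsf{A}(a_1,a_2))$ makes $(x,a)\mapsto\mathcal{E}_{h,N}(x,a;V_{h+1}^\star)$ Lipschitz with modulus controlled by $\mathcal{L}/q_{h+1}\le\mathcal{L}/\delta$, and since $\max\{\|\mathcal{L}\|_{L^1},\|\mathcal{L}\|_{L^\infty}\}\le L$ a chaining/Dudley-integral bound over the compact product space $\mathsf{S}\times\mathsf{A}$ (Assumption~\ref{ass:sets}) produces the $L\,\mathrm{DI}(\mathsf{S}\times\mathsf{A})/\delta$ and $L\,\mathrm{diam}(\mathsf{S})\mathrm{diam}(\mathsf{A})/\delta$ terms, the latter from the coarse-scale centering part of the supremum. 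Summing $\mathbb{E}[\mathcal{L}_h]\lesssim\frac{HG}{\sqrt N}(\dots)$ over the $H$ time steps then yields the stated $H^2G/\sqrt N$ factor. The main obstacle I anticipate is precisely this third step coupled with the ratio structure: controlling the supremum over the continuum of actions and over the random evaluation points $x=S_h^{(r)}$ of a self-normalized estimator whose denominator is itself a noisy leave-one-out density estimate built from the same sample. Disentangling this dependence — conditioning so that the leave-one-out denominator is unbiased and the Appendix~B bound applies uniformly to the reweighted process — is the delicate part, and the boundedness $\delta\le p\le\Lambda$ together with the global Lipschitz control are exactly what keep the required variance and entropy quantities finite.
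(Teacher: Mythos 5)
Your proposal is correct and follows essentially the same route as the paper's own proof: backward telescoping via the contraction property of the normalized weights (the paper's inequality (\ref{sumR})), then a split of the one-step error for the fixed function $V_{h+1}^{\star}$ into normalization, leave-one-out denominator-estimation, and idealized importance-sampling terms (the paper's $T_{1,r}$, $T_{2,r}$, $T_{3,r}$), each controlled exactly as you describe using $\delta\le p_{h+1}^{a}\le\Lambda$ and the uniform Dudley-integral bound of Proposition~\ref{prop:unif-exp}. Your two-way decomposition of the local error maps directly onto the paper's three terms, and your handling of the leave-one-out unbiasedness and of the supremum over $(x,a)$ (which also absorbs the randomness of the evaluation points $S_h^{(r)}$) is the same device the paper uses.
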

\begin{proof}
For $r=1,\ldots,N,$ one has
\begin{align*}
&  \left\vert \overline{V}_{h}(S_{h}^{(r)})-V_{h}^{\star}(S_{h}^{(r)})\right\vert \\
&  \leq\sup_{a\in\mathsf{A}}\left\vert \sum_{n=1}^{N}\overline{V}%
_{h+1}(S_{h+1}^{(n)})w_{h,n,N}(S_{h}^{(r)},a)-\mathbb{E}_{S_{h+1}\sim P_{h+1}(\cdot|S_{h}^{(r)},a)}\left[  V_{h+1}^{\star}(S_{h+1})\right]  \right\vert \\
&  \leq\sup_{a\in\mathsf{A}}\sum_{n=1}^{N}\left\vert \overline{V}%
_{h+1}(S_{h+1}^{(n)})-V_{h+1}^{\star}(S_{h+1}^{(n)})\right\vert
w_{h,n,N}(S_{h}^{(r)},a)\\
&  +\sup_{a\in\mathsf{A}}\left\vert \sum_{n=1}^{N}V_{h+1}^{\star}(S_{h+1}^{(n)%
})w_{h,n,N}(S_{h}^{(r)},a)-\mathbb{E}_{S_{h+1}\sim P_{h+1}(\cdot|S_{h}^{(r)},a)}\left[  V_{h+1}^{\star}(S_{h+1})\right]
\right\vert \\
&  \leq \|\overline{V}_{l+1}-V_{l+1}^{\star}\|_{N}+\mathcal{R}%
_{l+1},
\end{align*}
where%
\[
\|\overline{V}_{h}-V_{h}^{\star}\| _{N}:=\max_{1\leq r\leq
N}\left\vert \overline{V}_{h}(S_{h}^{(r)})-V_{h}^{\star}(S_{h}^{(r)%
})\right\vert 
\]
and
\[
\mathcal{R}_{h+1}:=
\sup_{\substack{a\in\mathsf{A}\\1\leq r\leq N}}\left\vert \sum_{n=1}%
^{N}V_{h+1}^{\star}(S_{h+1}^{(n)})w_{h,n,N}(S_{h}^{(r)},a)-\mathbb{E}_{S_{h+1}\sim P_{h+1}(\cdot|S_{h}^{(r)},a)}\left[  V_{h+1}^{\star}(S_{h+1})\right] \right\vert .
\]
Hence, since $\overline{V}_{H}-V_{H}^{\star}=0,$
\begin{equation}
\| \overline{V}_{h}-V_{h}^{\star}\|_{N}\leq\| \overline
{V}_{h+1}-V_{h+1}^{\star}\| _{N}+\mathcal{R}_{h+1}\leq\sum_{k=h}%
^{H-1}\mathcal{R}_{k+1}.\label{sumR}%
\end{equation}
We now proceed with the estimation of $\mathbb{E}\left[  \mathcal{R}%
_{h+1}\right]  ,$ $h=0,\ldots,H-1,$ and write%
\[
\sum_{n=1}^{N}V_{h+1}^{\star}(S_{h+1}^{(n)})w_{h,n,N}(S_{h}^{(r)}%
,a)-\mathbb{E}_{S_{h+1}\sim P_{h+1}(\cdot|S_{h}^{(r)},a)}\left[
V_{h+1}^{\star}(S_{h+1})\right]  =T_{1,r}(a)+T_{2,r}(a)+T_{3,r}(a)
\]
where
\begin{align*}
T_{1,r}(a) &  :=\sum_{n=1}^{N}{V}_{h+1}^{\star}(S_{h+1}^{(n)})\frac
{p_{h+1}^{a}(S_{h+1}^{(n)}|S_{h}^{(r)})}{\sum_{k=1,\,k\neq n}^{N}%
p_{h+1}^{b_{h}}(S_{h+1}^{(n)}|S_{h}^{(k)})}\left(  \left[  \sum_{n^{\prime}%
=1}^{N}\frac{p_{h+1}^{a}(S_{h+1}^{(n^{\prime})}|S_{h}^{(r)})}{\sum_{k^{\prime
}=1,\,k^{\prime}\neq n^{\prime}}^{N}p_{h+1}^{b_{h}}(S_{h+1}^{(n^{\prime}%
)}|S_{h}^{(k^{\prime})})}\right]  ^{-1}-1\right)  ,\\
T_{2,r}(a) &  :=\sum_{n=1}^{N}{V}_{h+1}^{\star}(S_{h+1}^{(n)})\left(
\frac{p_{h+1}^{a}(S_{h+1}^{(n)}|S_{h}^{(r)})}{\sum_{k=1,\,k\neq n}^{N}%
p_{h+1}^{b_{h}}(S_{h+1}^{(n)}|S_{h}^{(k)})}-\frac{p_{h+1}^{a}(S_{h+1}%
^{(n)}|S_{h}^{(r)})}{Np_{h+1}^{_{b_{<h+1}}}(S_{h+1}^{(n)})}\right)  ,\\
T_{3,r}(a) &  :=\sum_{n=1}^{N}{V}_{h+1}^{\star}(S_{h+1}^{(n)})\frac
{p_{h+1}^{a}(S_{h+1}^{(n)}|S_{h}^{(r)})}{Np_{h+1}^{_{b_{<h+1}}}(S_{h+1}%
^{(n)})}-\mathbb{E}_{S_{h+1}\sim P_{h+1}(\cdot|S_{h}^{(r)},a)}\left[
V^\star_{h+1}(S_{h+1})\right]  .
\end{align*}
We have
\begin{align*}
|T_{1,r}(a)| &  \leq HG\sup_{x\in\mathsf{S},\,a\in\mathsf{A}}\left\vert
\sum_{n^{\prime}=1}^{N}\frac{p_{h+1}^{a}(S_{h+1}^{(n^{\prime})}|x)}%
{\sum_{k^{\prime}=1,\,k^{\prime}\neq n^{\prime}}^{N}p_{h+1}^{b_{h}}%
(S_{h+1}^{(n^{\prime})}|S_{h}^{(k^{\prime})})}-1\right\vert \\
&  \leq HG\sup_{x\in\mathsf{S},\,a\in\mathsf{A}}\left\vert 1-\frac{1}{N}%
\sum_{n=1}^{N}\frac{p_{h+1}^{a}(S_{h+1}^{(n)}|x)}{p_{h+1}^{_{b_{<h+1}}%
}(S_{h+1}^{(n)})}\right\vert \\
&  +\frac{HG\Lambda}{N}\sum_{n=1}^{N}\left\vert \frac{1}{p_{h+1}^{_{b_{<h+1}}%
}(S_{h+1}^{(n)})}-\frac{1}{\frac{1}{N}\sum_{k=1,\,k\neq n}^{N}p_{h+1}^{b_{h}%
}(S_{h+1}^{(n)}|S_{l}^{(k)})}\right\vert .
\end{align*}
Note that
\[
\mathbb{E}\left[  \frac{p_{h+1}^{a}(S_{h+1}^{(n)}|x)}{p_{h+1}^{_{b_{<h+1}}%
}(S_{h+1}^{(n)})}\right]  =\int_{\mathsf{S}}p_{h+1}^{a}(z|x)\,dz=1,
\]
and
\[
p_{h+1}^{a_{<h+1}}(y)\,=\int_{\mathsf{S}}p_{h}^{a_{<h}}(z)p_{h+1}^{a_{h}%
}(y|z)\,dz\geq\delta\int_{\mathsf{S}}p_{h}^{a_{<h}}(z)\,dz=\delta
\]
for all $y\in\mathsf{S},$ $h\in\lbrack H[,$ $a_{<h+1}\in\mathsf{A}^{l+1}.$ It
then follows by Proposition~\ref{prop:unif-exp}, that%

\[
\mathbb{E}\sup_{x\in\mathsf{S},a\in\mathsf{A}}\left\vert 1-\frac{1}{N}%
\sum_{n=1}^{N}\frac{p_{h+1}^{a}(S_{h+1}^{(n)}|x)}{p_{h+1}^{_{b_{<h+1}}%
}(S_{h+1}^{(n)})}\right\vert \lesssim\frac{L\,\mathrm{DI}(\mathsf{S}%
\times\mathsf{A})+L\,\mathrm{diam}(\mathsf{S}\times\mathsf{A})+\Lambda
}{\delta\sqrt{N}}.
\]
Furthermore,
\begin{align*}
&  \mathbb{E}_{S_{h+1}^{(1)}}\left[  \left\vert \frac{1}{p_{h+1}^{_{b_{<h+1}}%
}(S_{h+1}^{(1)})}-\frac{1}{\frac{1}{N}\sum_{k=1,\,k\neq1}^{N}p_{h+1}^{b_{h}%
}(S_{h+1}^{(1)}|S_{h}^{(k)})}\right\vert \right]  \\
&  \leq\frac{N}{N-1}\delta^{-2}\mathbb{E}_{S_{h+1}^{(1)}}\left[  \left\vert
\frac{1}{N}\sum_{k=1,\,k\neq1}^{N}p_{h+1}^{b_{h}}(S_{h+1}^{(1)}|S_{h}%
^{(k)})-p_{h+1}^{_{b_{<h+1}}}(S_{h+1}^{(1)})\right\vert \right]  \\
&  \leq\delta^{-2}\mathbb{E}_{S_{h+1}^{(1)}}\left[  \left\vert \frac{1}%
{N-1}\sum_{k=1,\,k\neq1}^{N}\left(  p_{h+1}^{b_{h}}(S_{h+1}^{(1)}|S_{h}%
^{(k)})-p_{h+1}^{_{b_{<h+1}}}(S_{h+1}^{(1)})\right)  \right\vert \right]  \\
&  +\frac{\delta^{-2}}{N-1}p_{h+1}^{_{b_{<h+1}}}(S_{h+1}^{(1)})\\
&  \leq\frac{\delta^{-2}}{\sqrt{N-1}}\sqrt{\int_{\mathsf{S}}p_{h+1}^{b_{h}%
}(S_{h+1}^{(1)}|z)^{2}p_{h}^{_{b_{<h}}}(z)\,dz}+\frac{\delta^{-2}}{N-1}%
p_{h+1}^{_{b_{<h+1}}}(S_{h+1}^{(1)}),
\end{align*}
and so for each $n=1,\ldots,N$ we have by symmetry and Jensen's inequality,
\begin{multline*}
\mathbb{E}\left[  \left\vert \frac{1}{p_{h+1}^{_{b_{<h+1}}}(S_{h+1}^{(n)}%
)}-\frac{1}{\frac{1}{N}\sum_{k=1,\,k\neq n}^{N}p_{h+1}^{b_{h}}(S_{h+1}%
^{(n)}|S_{h}^{(k)})}\right\vert \right]  \\
=\mathbb{E}\left[  \left\vert \frac{1}{p_{h+1}^{_{b_{<h+1}}}(S_{h+1}^{(1)}%
)}-\frac{1}{\frac{1}{N}\sum_{k=1,\,k\neq1}^{N}p_{h+1}^{b_{h}}(S_{h+1}%
^{(1)}|S_{h}^{(k)})}\right\vert \right]  \\
\lesssim\frac{\delta^{-2}}{\sqrt{N}}\sqrt{\int_{\mathsf{S}\times\mathsf{S}%
}p_{h+1}^{b_{h}}(z^{\prime}|z)^{2}p_{h}^{_{b_{<h}}}(z)p_{h+1}^{_{b_{<h+1}}%
}(z^{\prime})\,dz\,dz^{\prime}}
\lesssim\frac{\Lambda}{\delta^{2}\sqrt{N}}%
\end{multline*}
for $N>N_{0}.$ Analogously, we have
\begin{align*}
\mathbb{E}\left[  \max_{r\in\lbrack N],\,a\in\mathsf{A}}|T_{2,r}(a)|\right]
&  \leq HG\Lambda\mathbb{E}\left[  \left\vert \frac{1}{\frac{1}{N}%
\sum_{k=1,\,k\neq1}^{N}p_{h+1}^{b_{h}}(S_{h+1}^{(1)}|S_{h}^{(k)})}-\frac
{1}{p_{h+1}^{_{b_{<h+1}}}(S_{h+1}^{(1)})}\right\vert \right]  \\
&  \leq\frac{HG\Lambda^{2}}{\delta^{2}\sqrt{N}}%
\end{align*}
for $N>N_{0}.$ We next consider $T_{3,r}.$ For each
fixed $x\in \mathsf{S}$ and $a\in\mathsf{A},$ we have
\[
\mathbb{E}\left[  {V}_{h+1}^{\star}(S_{h+1}^{(n)})\frac{p_{h+1}^{a}%
(S_{h+1}^{(n)}|x)}{p_{h+1}^{_{b_{<h+1}}}(S_{h+1}^{(n)})}\right]
=\int_{\mathsf{S}}{V}_{h+1}^{\star}(z)p_{h+1}^{a}(z|x)\,dz=\mathbb{E}%
_{S_{h+1}\sim P_{h+1}(\cdot|x,a)}\left[  V_{h+1}^{\star}(S_{h+1})\right]  .
\]
Then, by  Proposition~\ref{prop:unif-exp} again, it follows that
\begin{multline*}
\mathbb{E}\left[  \max_{r\in\lbrack N],\,a\in\mathsf{A}}|T_{3,r}(a)|\right]
\leq\\
\mathbb{E}\left[  \sup_{x\in\mathsf{S},\,a\in\mathsf{A}}\left\vert \frac{1}%
{N}\sum_{n=1}^{N}{V}_{h+1}^{\star}(S_{h+1}^{(n)})\frac{p_{h+1}^{a}%
(S_{h+1}^{(n)}|x)}{p_{h+1}^{_{b_{<h+1}}}(S_{h+1}^{(n)})}-\mathbb{E}%
_{S_{h+1}\sim P_{h+1}(\cdot|x,a)}\left[  V_{h+1}^{\star}(S_{h+1})\right]
\right\vert \right]  \\
\lesssim\frac{HG}{\delta}\frac{L\,\mathrm{DI}(\mathsf{S}\times\mathsf{A}%
)+L\,\mathrm{diam}(\mathsf{S}\times\mathsf{A})+\Lambda}{\sqrt{N}}.
\end{multline*}
Finally,  we apply (\ref{sumR}) for $h=0.$
\end{proof}
\subsection{Non-compact case}
If \(\mathsf{S}\) is not compact subset of \(\mathbb{R}^d\)  we consider its approximation by compact susbsets.
Let \(\mathcal{D}\) be compact subset of \(\mathsf{S}\) and let $(S_{k}^{(h,x),\mathcal{D}}(\boldsymbol{\pi}),\,k=h,\ldots,H)$ be a
process obtained by reflection of the chain $(S_{k}^{(h,x)}(\boldsymbol{\pi
}),\,k=h,\ldots,H)$ reflected in  $\mathcal{D}$ as described
in Appendix~\ref{sec:refl}. For a fixed policy $\boldsymbol{\pi}\in\Pi$ and
$x\in\mathcal{D},$ consider the exit (stopping) times,%
\[
\tau_{h}^{x,\mathcal{D}}:=\min\left\{  k\geq h:S_{k}^{(h,x)}\notin
\mathcal{D}\right\}  .
\]
where $(S_{k}^{(h,x)}=S_{k}^{(h,x)}(\boldsymbol{\pi}),\,k=h,\ldots,H)$ stands
for the chain \eqref{eq:chain} following the policy $\boldsymbol{\pi}$ and
starting in $x$ at time $h.$ Hence%
\[
S_{k}^{(h,x),\mathcal{D}}1_{\left\{  \tau_{h}^{x,\mathcal{D}}>k\right\}
}\overset{\mathrm{Law}}{=}S_{k}^{(h,x)}1_{\left\{  \tau_{h}^{x,\mathcal{D}%
}>k\right\}  }.
\]
We now consider the MDP in the compact domain $\mathcal{D},$
\begin{equation}
{V}_{h}^{\mathcal{D}}(x):=\sup_{\boldsymbol{\pi}\in\Pi}\mathbb{E}%
_{\boldsymbol{\pi}}\left[  \sum_{k=h}^{H-1}R_{k}(S_{k}^{(h,x),\mathcal{D}%
},A_{k})+F_{H}(S_{H}^{(h,x),\mathcal{D}})\right]  \label{MDPt}%
\end{equation}
as an approximation to $V_{h}(x),$ $h\in\lbrack H-1].$ It is not difficult to
see that
\begin{align*}
\left\vert {V}_{h}^{\mathcal{D}}(x)-V_{h}^{\star}(x)\right\vert  & \lesssim
HG\sup_{\boldsymbol{\pi}\in\Pi}\mathbb{P}_{\boldsymbol{\pi}}(\tau
_{h}^{x,\mathcal{D}}\leq H)\\
& \lesssim HG\sum_{l=h}^{H}\sup_{\boldsymbol{\pi}\in\Pi}\mathbb{P}%
_{\boldsymbol{\pi}}\bigl(S_{l}^{(x,h)}\notin\mathcal{D}\bigr).
\end{align*}
Furthermore, the one-step transition density $p_{h}^{a,\mathcal{D}}$ of the
process $(S_{h}^{\mathcal{D}})$ is given by (see Appendix~\ref{sec:refl})%
\[
p_{h}^{a,\mathcal{D}}(y|x)=p_{h}^{a}(y|x)+\frac{1}{\lambda(\mathcal{D})}%
\int_{\mathbb{R}^{d}\setminus\mathcal{D}}p_{h}^{a}(z|x)dz,\text{ \ \ }%
x,y\in\mathcal{D}.
\]
Instead of  Assumption~\ref{ass: reg} we now consider the following weaker assumption on \(\mathcal{D}\).
\begin{assumption}
\label{ass: reg_unbound} For any compact subset $\mathcal{D}$ of $\mathsf{S}$
there exist some constants $\delta_{\mathcal{D}}>0,$ $\Lambda>0,$
$L_{\mathcal{D}}>0,$ and a function $\mathcal{L}_{\mathcal{D}}:$
$\mathsf{S}\rightarrow\mathbb{R}_{+}$ such that the 
one-step transition
density $p_{h}^{a}$ satisfies
\[
0<\delta_{\mathcal{D}}\leq p_{h}^{a}(z|x)\leq\Lambda,\quad\left\vert
p_{h}^{a_{1}}(y|x_{1})-p_{h}^{a_{2}}(y|x_{2})\right\vert \leq\mathcal{L}%
_{\mathcal{D}}(y)(|x_{1}-x_{2}|+\rho_{\mathsf{A}}(a_{1},a_{2}))
\]
for all $x,z,x_{1},x_{2}\in\mathcal{D},$ 
$y\in\mathbb{R}^{d},$ $a,a_{1},a_{2}%
\in\mathsf{A}$ and $h=1,\ldots,H,$ where
\[
\Vert\mathcal{L}_{\mathcal{D}}\Vert_{L^{\infty}(\mathcal{D})}+\frac{1}%
{\lambda\left(  \mathcal{D}\right)  }\Vert\mathcal{L}_{\mathcal{D}}%
\Vert_{L^{1}(\mathbb{R}^{d}\setminus\mathcal{D})}\leq L_{\mathcal{D}}.
\]
Moreover
\[
\max\{|R_{h}(s,a)|,|F(s)|\}\leq G,\quad(s,a)\in\mathsf{S}\times\mathsf{A}%
,\quad h\in\lbrack H[.
\]

\end{assumption}
Hence  \(p_{h}^{a,\mathcal{D}}(y|x)\geq\delta_{\mathcal{D}}\)
 for $x,y\in
\mathcal{D}$ and \(a\in  \mathsf{A}\), and furthermore,
\begin{eqnarray*}
|p_{h}^{a_{1},\mathcal{D}}(y|x_{1})-p_{h}^{a_{2},\mathcal{D}}(y|x_{2}%
)|&\leq &|p_{h}^{a_{1}}(y|x_{1})-p_{h}^{a_{2}}(y|x_{2})|+\frac{1}{\lambda
(\mathcal{D})}\int_{\mathbb{R}^{d}\setminus\mathcal{D}}\left\vert p_{h}%
^{a_{1}}(z|x_{1})-p_{h}^{a_{2}}(z|x_{2})\right\vert dz
\\
&\leq & L_{\mathcal{D}}(|x_1-x_2|+\rho_{\mathsf{A}}(a_1,a_2) ).
\end{eqnarray*}
\begin{theorem}
\label{thm:bound-ineq}
Fix some \(x_0\) then under Assumption~\ref{ass: reg_unbound}, it  holds that%
\begin{align*}
\mathbb{E}\left[  \bigl|\overline{V}_{0}^{\mathcal{D}}(x_0)-V_{0}^{\star}(x_0)\bigr |
\right]   &  \lesssim\frac{H^2
G}{\sqrt{N}}\left(  \frac{L_{\mathcal{D}} \, \mathrm{DI}(\mathcal{D}\times\mathsf{A}%
) +   L_{\mathcal{D}}\,\mathrm{diam}(\mathcal{D})\mathrm{diam}(\mathsf{A})   +  \Lambda_{\mathcal{D}} }{\delta_{\mathcal D}}+\frac{\Lambda^2
}{\delta_{\mathcal D}^{2}}\right)
\\
&+HG\sum_{l=0}^{H}\sup_{\boldsymbol{\pi}\in\Pi}\mathbb{P}%
_{\boldsymbol{\pi}}\bigl(S_{l}^{(x_0,0)}\notin\mathcal{D}\bigr)  
\end{align*}
for all $N>N_{0}$ with $N_{0}$ large enough  and $\lesssim$ denoting $\leq$ up to
some absolute proportionality constant. 
\end{theorem}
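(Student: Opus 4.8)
The plan is to treat the reflected process of Appendix~\ref{sec:refl} as an intermediate object and to split the error by the triangle inequality into a statistical (Monte Carlo) part, handled by the already established compact bound, and a deterministic truncation part, handled by the exit-probability estimate derived just above. Concretely, I would first write
\[
\mathbb{E}\bigl[|\overline{V}_{0}^{\mathcal{D}}(x_0)-V_{0}^{\star}(x_0)|\bigr]\leq\mathbb{E}\bigl[|\overline{V}_{0}^{\mathcal{D}}(x_0)-V_{0}^{\mathcal{D}}(x_0)|\bigr]+\bigl|V_{0}^{\mathcal{D}}(x_0)-V_{0}^{\star}(x_0)\bigr|,
\]
where $V_{0}^{\mathcal{D}}$ is the optimal value of the reflected MDP (\ref{MDPt}) and $\overline{V}_{0}^{\mathcal{D}}$ is the output of the algorithm of Section~\ref{algB} run on the reflected chain $(S_{h}^{(n),\mathcal{D}})$ with the reflected densities $p_{h}^{a,\mathcal{D}}$. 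The second summand is deterministic, so no outer expectation is needed there.

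For the statistical summand I would observe that the reflected MDP lives on the \emph{compact} set $\mathcal{D}$ and that its transition density $p_{h}^{a,\mathcal{D}}$ satisfies Assumption~\ref{ass: reg} with $\mathsf{S}$ replaced by $\mathcal{D}$: the lower bound $p_{h}^{a,\mathcal{D}}\geq\delta_{\mathcal{D}}$, the Lipschitz bound with constant $L_{\mathcal{D}}$, and the unchanged reward bound $G$ are precisely the estimates recorded immediately after Assumption~\ref{ass: reg_unbound}, while the upper bound follows from the reflection formula by reading off $\Lambda_{\mathcal{D}}:=\Lambda+1/\lambda(\mathcal{D})$, since the correction term obeys $\frac{1}{\lambda(\mathcal{D})}\int_{\mathbb{R}^{d}\setminus\mathcal{D}}p_{h}^{a}(z|x)\,dz\leq 1/\lambda(\mathcal{D})$. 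Granting these hypotheses, Theorem~\ref{thmcomp} applies verbatim to the reflected MDP, with $\delta,L,\Lambda$ and $\mathrm{diam}(\mathsf{S})$ replaced by $\delta_{\mathcal{D}},L_{\mathcal{D}},\Lambda_{\mathcal{D}}$ and $\mathrm{diam}(\mathcal{D})$, and it bounds the statistical summand by the first line of the asserted inequality.

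The truncation summand is then controlled by the bias estimate already established above: the reflection coupling gives $S_{k}^{(h,x),\mathcal{D}}\eins_{\{\tau_{h}^{x,\mathcal{D}}>k\}}\overset{\mathrm{Law}}{=}S_{k}^{(h,x)}\eins_{\{\tau_{h}^{x,\mathcal{D}}>k\}}$, and combining this with the uniform reward bound $G$ yields
\[
\bigl|V_{0}^{\mathcal{D}}(x_0)-V_{0}^{\star}(x_0)\bigr|\lesssim HG\sum_{l=0}^{H}\sup_{\boldsymbol{\pi}\in\Pi}\mathbb{P}_{\boldsymbol{\pi}}\bigl(S_{l}^{(x_0,0)}\notin\mathcal{D}\bigr).
\]
Adding the two bounds gives the claim. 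I expect the genuine work to sit entirely in the second step: one must check that reflection preserves \emph{all} of the structural hypotheses needed to rerun the compact analysis---not merely the density bounds but also that the reflected chain still arises from random iterative functions (Assumption~\ref{eq:iterfunc-repr}) and satisfies the boundedness/continuity conditions underlying Theorem~\ref{Bellman} and Proposition~\ref{prop:unif-exp}---and that the effective upper constant $\Lambda_{\mathcal{D}}$ entering the numerator is tracked correctly through the empirical-process bound. Everything else is a direct transcription of the compact case.
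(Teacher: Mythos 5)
Your proposal is correct and is essentially the paper's own argument: the paper leaves the proof of Theorem~\ref{thm:bound-ineq} implicit, having already established in the text immediately preceding it exactly your two ingredients --- the truncation bias bound $|V_{0}^{\mathcal{D}}(x_0)-V_{0}^{\star}(x_0)|\lesssim HG\sum_{l}\sup_{\boldsymbol{\pi}}\mathbb{P}_{\boldsymbol{\pi}}(S_{l}^{(x_0,0)}\notin\mathcal{D})$ and the verification that the reflected density $p_{h}^{a,\mathcal{D}}$ satisfies Assumption~\ref{ass: reg} on $\mathcal{D}$ with constants $\delta_{\mathcal{D}}$, $L_{\mathcal{D}}$ (and, as you note, $\Lambda_{\mathcal{D}}=\Lambda+1/\lambda(\mathcal{D})$) --- so that Theorem~\ref{thmcomp} applies verbatim to the reflected MDP and the triangle inequality finishes the proof. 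Your closing remark about tracking $\Lambda_{\mathcal{D}}$ through the empirical-process bound is apt, since the paper's statement mixes $\Lambda_{\mathcal{D}}$ and $\Lambda$ without defining the former.
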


\section{Complexity}
In this section, we estimate the computational budget, that is, the complexity, needed for computing $V^\star(x_0)$ (in $L^1$) with a given accuracy $\epsilon>0$, by the algorithm presented in Section~\ref{algB}. For simplicity we disregard the cost of the optimization step and identify the overall
cost with $HN^2$, that is, the costs of computing all weights \eqref{appr} for $N$ trajectories.
\subsection{Complexity for compact $\mathsf{S}$ and $\mathsf{A}$}\label{compcomp}
In order to explicitly incorporate the dimension of the state and
action space in the complexity estimation, we consider a sequence of MDPs for
running $d=1,2,\ldots$ Without much loss of generality we assume that
$\mathsf{S}_{d}=B_{R_{d}}^{d}\subset\mathbb{R}^{d},$ $\mathsf{A}_{d}=B_{A_{d}%
}^{d}\subset\mathbb{R}^{d}$ for some $R_{d}>0$ and $A_{d}>0$ with $B_{R}^{d}$
being the Euclidean ball in $\mathbb{R}^{d}$ of radius $R.$ We further assume
that in dimension $d,$ the transition probabilities are given by $p_{d,h}%
^{a}(y|x).$ Furthermore it is assumed that the bound $G$ in Assumption~\ref{ass: reg}
holds uniformly in $d.$ Obviously, if $R_{d},A_{d},$ and $p_{d,h}^{a}(y|x)$
are such that $L_{d},$ $\Lambda_{d},$ and $\delta_{d}^{-1}$ due to
Assumption~\ref{ass: reg} can taken to be polynomially bounded in $d,$ then Theorem~\ref{thmcomp} implies that%
\[
\mathcal{C}\left(  \epsilon,d\right)  \lesssim\frac{H^{9}G^{4}}{\epsilon^{4}%
}\text{polynomial}\left(  d\right)
\]
In this case, the mesh algorithm is tractable in the sense of \cite{NovakWozniakowski2008}, that is,%
\[
\lim_{d+\epsilon^{-1}\rightarrow\infty}\frac{\log\mathcal{C}\left(
\epsilon,d\right)  }{d+\epsilon^{-1}}=0.
\]
This result can be seen as an extension of \cite{rust1997using} to the case of finite horizon MDPs with more general state and action spaces. 
\subsection{Complexity  for noncompact $\mathsf{S}$ and compact $\mathsf{A}$. In particular, $\mathsf{A}$ can be infinite.}

Let us now consider the noncompact case with $\mathsf{S}=\mathbb{R}^{d},$
$A\subset\mathbb{R}^{d}$ in the setup of Section~\ref{compcomp}. We then have
the following result.

\begin{proposition}
\label{prop:noncomp-compl} Suppose that, for a
{generic} $d,$ there is a sequence of compact sets $\mathcal{D}_{d,N},$
$N\in\mathbb{N}$, such that
\[
\frac{L_{\mathcal{D}_{d,N}}\,\mathrm{DI}(\mathcal{D}_{d,N}\times\mathsf{A}%
_{d})+L_{\mathcal{D}_{d,N}}\,\mathrm{diam}(\mathcal{D}_{d,N})\mathrm{diam}%
(\mathsf{A}_{d})}{\delta_{\mathcal{D}_{d,N}}}+\frac{\Lambda_{d}^{2}}%
{\delta_{\mathcal{D}_{d,N}}^{2}}\leq C_{1}(H,\log N,d)N^{\alpha}%
\]
and
\[
\sum_{h=1}^{H}\sup_{\boldsymbol{\pi}\in\Pi}\mathbb{P}_{\boldsymbol{\pi}%
}\bigl(S_{h}^{(x_{0},0)}\notin\mathcal{D}_{d,N}\bigr)\leq C_{2}(H,\log
N,d)N^{-\beta}%
\]
for $N>N_{0}$, where $C_{1}$ and $C_{2}$ are functions on $\mathbb{N}%
\times\mathbb{R}\times\mathbb{N}$ such that
\[
0\leq C_{1,2}(x,y,d)\leq c_{d}\left\vert xy\right\vert ^{q_{d}}\text{ \ \ for
all }x,y\geq1,
\]
and the parameters $\alpha\in\lbrack0,1/2),$ $\beta>0$ do not depend on $N$
and $d$. Here both $c_{d}>0$ and $q_{d}\in\mathbb{R}_{+}$ are independent of
$H$ and $\epsilon.$ Then the complexity $\mathcal{C}(\epsilon,d)$ of our
algorithm can be bounded as
\begin{multline}
\mathcal{C}(\epsilon,d)\lesssim H\max\left(  2GcH^{2}\left(  \frac
{2H}{1-2\alpha}\right)  ^{q_{d}},2GcH\left(  \frac{H}{\beta}\right)  ^{q_{d}%
},1\right)  ^{2\max\left(  1/\beta,2/(1-2\alpha)\right)  }\label{noncompcomp0}%
\\
\times\frac{\log^{2q_{d}\max\left(  1/\beta,2/(1-2\alpha)\right)  }%
(1/\epsilon)}{\epsilon^{2\max\left(  1/\beta,2/(1-2\alpha)\right)  }}.
\end{multline}

\end{proposition}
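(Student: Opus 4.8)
The plan is to feed the two hypotheses directly into Theorem~\ref{thm:bound-ineq}, turning its right-hand side into a sum of two explicit power laws in $N$ with polylogarithmic prefactors, then to choose the sample size $N=N(\epsilon,d)$ so that the total $L^1$-error drops below $\epsilon$, and finally to read off the complexity from the identification of the cost with $HN^2$.

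First I would apply Theorem~\ref{thm:bound-ineq} with $\mathcal{D}=\mathcal{D}_{d,N}$. Since $\delta_{\mathcal{D}}$ is a density lower bound and hence $\le\Lambda$, the extra $\Lambda_{\mathcal{D}}/\delta_{\mathcal{D}}$ summand in the bracket of Theorem~\ref{thm:bound-ineq} is dominated by the $\Lambda^2/\delta_{\mathcal{D}}^2$ summand, so the whole bracket is $\lesssim$ the left-hand side of the first hypothesis; thus the statistical part is $\lesssim H^2G\,C_1(H,\log N,d)\,N^{\alpha-1/2}$, while the truncation part is $\lesssim HG\,C_2(H,\log N,d)\,N^{-\beta}$ by the second hypothesis. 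Inserting the envelope $C_{1,2}(x,y,d)\le c_d|xy|^{q_d}$ at $x=H\ge1$, $y=\log N\ge1$ gives
\[
\mathbb{E}\bigl[\,|\overline{V}_0^{\mathcal{D}_{d,N}}(x_0)-V_0^\star(x_0)|\,\bigr]\ \lesssim\ A_1(H\log N)^{q_d}N^{-\gamma_1}+A_2(H\log N)^{q_d}N^{-\gamma_2},
\]
with $\gamma_1:=1/2-\alpha>0$, $\gamma_2:=\beta>0$, $A_1:=H^2Gc_d$ and $A_2:=HGc_d$.

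Next I would force each summand below $\epsilon/2$. For the $i$-th term the natural self-consistent choice is $N_i:=\bigl(M_i\,\epsilon^{-1}\log^{q_d}(1/\epsilon)\bigr)^{1/\gamma_i}$ with $M_1:=2A_1(H/\gamma_1)^{q_d}=2GcH^2\bigl(2H/(1-2\alpha)\bigr)^{q_d}$ and $M_2:=2A_2(H/\gamma_2)^{q_d}=2GcH(H/\beta)^{q_d}$. Setting $\nu:=\max\{1/\gamma_1,1/\gamma_2\}=\max\{2/(1-2\alpha),1/\beta\}$ and $M:=\max\{M_1,M_2,1\}$, the monotonicity relations $M\ge M_i\ge1$, $\epsilon^{-1}\ge1$, $\log(1/\epsilon)\ge1$ together with $\nu\ge1/\gamma_i$ yield $N_i\le M^\nu\epsilon^{-\nu}\log^{q_d\nu}(1/\epsilon)=:\overline N$ for both $i$, so it suffices to run the algorithm with $N=\lceil\overline N\rceil$.

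The one genuinely non-routine point — and the main obstacle — is that $\log N$ occurs on both sides of the target inequality, so $N_i$ is only defined self-referentially. Plugging $N_i$ back in and using $M_i=2A_i(H/\gamma_i)^{q_d}$ produces the clean identity
\[
A_i(H\log N_i)^{q_d}N_i^{-\gamma_i}=\frac{\epsilon}{2}\left(\frac{\gamma_i\log N_i}{\log(1/\epsilon)}\right)^{q_d},
\]
exhibiting term $i$ as $\epsilon/2$ times a factor that tends to $1$ as $\epsilon\searrow0$, because $\gamma_i\log N_i=\log(1/\epsilon)+\log M_i+q_d\log\log(1/\epsilon)$ with the last two summands of lower order in $\log(1/\epsilon)$. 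Absorbing this $(1+o(1))$ overshoot by inflating $N$ by a bounded multiplicative constant (harmless under $\lesssim$) and restricting to $\epsilon$ small, equivalently $N>N_0$, secures term $i\le\epsilon/2$ for both $i$, hence a total error $\le\epsilon$. Finally, since the cost is $HN^2$ and $N=\lceil\overline N\rceil\lesssim M^\nu\epsilon^{-\nu}\log^{q_d\nu}(1/\epsilon)$,
\[
\mathcal{C}(\epsilon,d)\lesssim HN^2\lesssim H\,M^{2\nu}\epsilon^{-2\nu}\log^{2q_d\nu}(1/\epsilon),
\]
which is exactly \eqref{noncompcomp0} upon unwinding $M$ and $2\nu=2\max\{1/\beta,2/(1-2\alpha)\}$.
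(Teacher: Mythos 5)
Your proposal is correct and follows essentially the same route as the paper: reduce via Theorem~\ref{thm:bound-ineq} to the two conditions $GcH^{2+q_d}N^{\alpha-1/2}\log^{q_d}N\leq\epsilon/2$ and $GcH^{1+q_d}N^{-\beta}\log^{q_d}N\leq\epsilon/2$, resolve the self-referential appearance of $\log N$ asymptotically, and conclude with $\mathcal{C}=HN^{2}$. The only cosmetic difference is that the paper solves $N/\log^{b}N=t$ through a standalone asymptotic lemma giving $N=(1+o(1))t\log^{b}t$, while you verify the equivalent explicit ansatz $N_{i}=\bigl(M_{i}\epsilon^{-1}\log^{q_d}(1/\epsilon)\bigr)^{1/\gamma_{i}}$ directly (and you additionally justify absorbing the $\Lambda_{\mathcal{D}}/\delta_{\mathcal{D}}$ term, a point the paper leaves implicit).
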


\begin{corollary}
\label{addcor} If one has in addition that $q_{d}\leq\eta d$ and $c_{d}\leq
c_{0}\exp\left(  \lambda d\right)  $ for some universal constants
$c_{0},\eta,\lambda>0,$ one obtains%
\begin{multline*}
\mathcal{C}\left(  \epsilon,d\right)  \lesssim H\max\left(  2Gc_{0}e^{\lambda
d}H^{2+\eta d}\frac{2^{\eta d}}{\left(  1-2\alpha\right)  ^{\eta d}}%
,2Gc_{0}e^{\lambda d}H^{1+\eta d}\frac{1}{\left(  \beta\wedge1\right)  ^{\eta
d}},1\right)  ^{2\max\left(  1/\beta,2/(1-2\alpha)\right)  }\\
\times\frac{\log^{2\eta d\max\left(  1/\beta,2/(1-2\alpha)\right)
}(1/\epsilon)}{\epsilon^{2\max\left(  1/\beta,2/(1-2\alpha)\right)  }},
\end{multline*}
which implies%
\[
\log\mathcal{C}\left(  \epsilon,d\right)  =r_{1}\log H+\left(  r_{2}+r_{3}\log
H+r_{4}\log\log\frac{1}{\epsilon}\right)  d+r_{5}\log\frac{1}{\epsilon}%
\]
for certain constants $r_{1},\ldots,r_{5}>0.$ From this it is easy to see that
the problem is not tractable in the sense of \cite{NovakWozniakowski2008},
but, since%
\[
\lim_{d\rightarrow\infty}\lim_{\epsilon\searrow0}\frac{\log\mathcal{C}\left(
\epsilon,d\right)  }{f(d)\log(1/\epsilon)}=0\text{ \ \ for any }f\text{ with
}f(d)\rightarrow\infty\text{ as }d\rightarrow\infty,
\]
the problem is semi-tractable in the sense of \cite{belkalsch} and we have a
kind of \textquotedblleft weak curse of dimensionality\textquotedblright.
\end{corollary}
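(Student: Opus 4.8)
The plan is to argue in three stages: first feed the two growth hypotheses $q_d \le \eta d$ and $c_d \le c_0 e^{\lambda d}$ into the complexity bound of Proposition~\ref{prop:noncomp-compl} to obtain the first displayed inequality of the corollary; then take logarithms and collect by powers of $d$ to read off the announced affine-in-$d$ form of $\log\mathcal{C}(\epsilon,d)$; and finally evaluate the two relevant limits to deduce non-tractability on the one hand and semi-tractability on the other. Only the second stage requires any care.

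For the first stage I would substitute the hypotheses into each of the two competing terms inside the $\max$ of Proposition~\ref{prop:noncomp-compl}. Since $\alpha\in[0,1/2)$ gives $\tfrac{2H}{1-2\alpha}\ge 2H>1$, the bound $q_d\le\eta d$ yields $\bigl(\tfrac{2H}{1-2\alpha}\bigr)^{q_d}\le \tfrac{2^{\eta d}H^{\eta d}}{(1-2\alpha)^{\eta d}}$; for the second term I would first pass to $\bigl(\tfrac{H}{\beta}\bigr)^{q_d}\le\bigl(\tfrac{H}{\beta\wedge 1}\bigr)^{q_d}$, the truncation $\beta\wedge1\le1$ guaranteeing a base $\ge1$ so that raising the exponent to $\eta d$ only increases the quantity. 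Using $q_d\le\eta d$ also in the exponent of $\log(1/\epsilon)$, valid for $\epsilon\le e^{-1}$ where $\log(1/\epsilon)\ge1$, reproduces exactly the displayed upper bound for $\mathcal{C}(\epsilon,d)$. This stage is purely mechanical.

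For the second stage, set $M:=\max\bigl(1/\beta,\,2/(1-2\alpha)\bigr)$ and take logarithms. The logarithm of each term inside the $\max$ is affine in $d$: the intercept is $O(1+\log H)$, while the coefficient of $d$ has the shape $\text{const}+\text{const}\cdot\log H$, the $\log H$ piece entering through the factor $H^{\eta d}$ and the constants through $\lambda d$ and the $\log2,\log(1-2\alpha),\log(\beta\wedge1)$ contributions. Multiplying by the outer exponent $2M$, adding the $2\eta d\,M\log\log(1/\epsilon)$ coming from the power of $\log(1/\epsilon)$ and the $2M\log(1/\epsilon)$ from the power of $\epsilon$, and regrouping by powers of $d$ gives
\[
\log\mathcal{C}(\epsilon,d)=r_1\log H+\bigl(r_2+r_3\log H+r_4\log\log\tfrac{1}{\epsilon}\bigr)d+r_5\log\tfrac{1}{\epsilon},
\]
with $r_1,\dots,r_5>0$ (in particular $r_5=2M$ and $r_4=2\eta M$). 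The only point demanding attention here is bookkeeping: $\log H$ and $\log\log(1/\epsilon)$ must be tracked into the \emph{coefficient} of $d$, not the intercept, since it is precisely the terms $r_3\log H\cdot d$ and $r_4\log\log(1/\epsilon)\cdot d$ that simultaneously break tractability and yet stay compatible with semi-tractability once the iterated limit is taken in the prescribed order.

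For the third stage I would read both limits off this expression. To see the problem is not tractable in the sense of Novak--Wo\'{z}niakowski, fix any $\epsilon\le e^{-1}$ (so $\log\log(1/\epsilon)\ge0$) and let $d\to\infty$: then $d+\epsilon^{-1}\to\infty$, the numerator grows like $(r_2+r_3\log H)d$, and $\tfrac{\log\mathcal{C}(\epsilon,d)}{d+\epsilon^{-1}}\to r_2+r_3\log H>0$, so the defining limit cannot be $0$. For semi-tractability, take the inner limit $\epsilon\searrow0$ at fixed $d$ and divide by $f(d)\log(1/\epsilon)$: the $r_1\log H$ and $(r_2+r_3\log H)d$ terms vanish since the denominator diverges, the $r_4 d\log\log(1/\epsilon)$ term vanishes because $\log\log(1/\epsilon)/\log(1/\epsilon)\to0$, and only $r_5/f(d)$ survives; letting $d\to\infty$ with $f(d)\to\infty$ then sends $r_5/f(d)\to0$, giving the double limit $0$.
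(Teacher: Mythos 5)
Your proof is correct and follows essentially the same route as the paper, which states the corollary as a direct substitution of $q_d\le\eta d$, $c_d\le c_0e^{\lambda d}$ into the bound of Proposition~\ref{prop:noncomp-compl} followed by taking logarithms and evaluating the two limits. You even supply the small details the paper leaves implicit --- the truncation $\beta\wedge 1$ to ensure a base $\ge 1$ before enlarging the exponent to $\eta d$, the restriction $\log(1/\epsilon)\ge 1$ for the log-power, and the careful placement of $\log H$ and $\log\log(1/\epsilon)$ in the coefficient of $d$ --- so nothing further is needed.
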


The next section provides an example where Corollary~\ref{addcor} applies.

\subsection{Example: Gaussian transition densities}\label{GaussEx}
Let us consider the case of Gaussian transition probabilities of the form
\begin{equation}
p_{h}^{a}(y|x)\equiv
p_{d,h}^{a}(y|x)=\frac{1}{(2\pi\sigma_{h}^{2})^{d/2}}\exp(-|x-y-a|^{2}%
/(2\sigma_{h}^{2})),\quad x,y\in\mathbb{R}^{d}%
\label{eq:gauss-trans-dens}
\end{equation}
where the (scalar) variances $\sigma_{h},$ $h\in ]H]$ are all bounded
from above and below, that is, $0<\sigma_{\min}\leq\sigma_{h}\leq\sigma_{\max
}<\infty.$ Let $\mathsf{S}=\mathbb{R}^{d},$ $\mathsf{A}_d=B^d_{A}\subset
\mathbb{R}^{d}$ for some $A>0$ and $\mathcal{D}_{d,N}=B^d_{R_{N}}$ with $B^d_{R}$
being the Euclidean ball in $\mathbb{R}^{d}$ of radius $R.$ 
Such densities naturally appear as transition densities of discretized (e.g. via Euler scheme) diffusion processes, see \ref{sec:lqg} for numerical illustrations. Let us check now the assumptions of Proposition~\ref{prop:noncomp-compl} and Corollary~\ref{addcor}.
In what follows, we do not always denote  dependence on $d$ explicitly, for notational convenience. 
Choosing
\begin{equation}
R_{N}=\sqrt{\gamma\sigma_{\min}^{2}\log(N)/4}\text{ \ \ for some \ }\gamma
\in(0,1/4),\label{LNa}%
\end{equation}
we see that
\begin{equation}
p_{h}^{a}(y|x)\geq\frac{1}{(2\pi\sigma_{\max}^{2})^{d/2}}\exp(-A^{2}%
/\sigma_{\min}^{2})N^{-\gamma}=:\delta_{\mathcal{D}_{N}}\label{LN0}%
\end{equation}
for all $a\in\mathsf{A},$ $x,y\in\mathcal{D}_{N}.$ Furthermore, we have
\begin{equation}
p_{h}^{a}(y|x)\leq(2\pi\sigma_{\min}^{2})^{-d/2}=:\Lambda\text{ \ \ for all
\ \ }h\in\lbrack H].\label{LN00}%
\end{equation}
Note that for all $x_{1},x_{2}\in\mathcal{D}_N,$ $x,y\in\mathbb{R}^{d},$
$a,a_{1},a_{2}\in\mathsf{A},$ $h\in]H],$
\begin{align*}
\left\vert \nabla_{x}p_{h}^{a}(y|x)\right\vert  & =\left\vert \nabla_{a}%
p_{h}^{a}(y|x)\right\vert \\
& =\frac{1}{\sigma_{h}^{2}(2\pi\sigma_{h}^{2})^{d/2}}\left\vert
x-a-y\right\vert \exp(-|x-y-a|^{2}/(2\sigma_{h}^{2})).
\end{align*}
Hence
\begin{align*}
& \left\vert p_{h}^{a_{1}}(y|x_{1})-p_{h}^{a_{2}}(y|x_{2})\right\vert \\
& \leq\frac{\sqrt{2}}{\sigma_{\min}^{d+2}(2\pi)^{d/2}}\sup_{x\in
\mathcal{D}_{N},a\in\mathsf{A}\,}\left\{  \left\vert x-a-y\right\vert
\exp(-|x-y-a|^{2}/(2\sigma_{\max}^{2}))\right\}  \\
& \times\left(  \left\vert x_{1}-x_{2}\right\vert +\left\vert a_{1}%
-a_{2}\right\vert \right)  \\
& =:\mathcal{L}_{\mathcal{D}_{N}}\left(  y\right)  \left(  \left\vert
x_{1}-x_{2}\right\vert +\left\vert a_{1}-a_{2}\right\vert \right)  .
\end{align*}
So on the one hand we have%
\begin{align*}
\left\Vert \mathcal{L}_{\mathcal{D}_{N}}\right\Vert _{L^{\infty}\left(
\mathcal{D}_{N}\right)  }  & \leq\frac{\sqrt{2}}{\sigma_{\min}^{d+2}%
(2\pi)^{d/2}}\left(  2R_{N}+A\right)  \\
& \simeq\frac{R_{N}}{\sigma_{\min}^{d+2}2^{(d-3)/2}\pi^{d/2}},\text{
\ \ }N\rightarrow\infty.
\end{align*}
On the other hand, for $R_{N}>A$ and $\left\vert y\right\vert \geq2R_{N}$ it
holds that%
\[
0\leq\mathcal{L}_{\mathcal{D}_{N}}\left(  y\right)  \leq\frac{\sqrt{2}}%
{\sigma_{\min}^{d+2}(2\pi)^{d/2}}3\left\vert y\right\vert \exp(-\left(
|y|-2R_{N}\right)  ^{2}/(2\sigma_{\max}^{2})),
\]
from which we see that $\left\Vert \mathcal{L}_{\mathcal{D}_{N}}\right\Vert
_{L^{1}\left(  \mathbb{R}^{d}\right)  }<\infty$ for any $N,$ and moreover%
\begin{align*}
\left\Vert \mathcal{L}_{\mathcal{D}_{N}}\right\Vert _{L^{1}\left(
\mathbb{R}^{d}\right)  }  & \leq\left\Vert \mathcal{L}_{\mathcal{D}_{N}%
}\right\Vert _{L^{\infty}\left(  B_{R_{2N}}\right)  }\mathsf{Vol}\left(
B_{R_{2N}}\right)  \\
& +\frac{\sqrt{2}}{\sigma_{\min}^{d+2}(2\pi)^{d/2}}\int_{\left\vert
y\right\vert \geq2R_{N}}3\left\vert y\right\vert \exp(-\left(  |y|-2R_{N}%
\right)  ^{2}/(2\sigma_{\max}^{2}))dy\\
& =\frac{2^{(5+d)/2}}{\sigma_{\min}^{d+2}\Gamma(d/2+1)}R_{N}^{d+1}\\
& +\frac{3\sigma_{\max}}{\sigma_{\min}^{d+2}2^{d/2-1}\Gamma(d/2)}\int_{
0}^\infty\left(  \sigma_{\max}\sqrt{2t}+2R_{N}\right)  ^{d}t^{-1/2}\exp(-t)dt\\
& \equiv\text{Term1}_{N}+\text{Term2}_{N},
\end{align*}
where some standard estimates show that Term2$_{N}\lesssim_{d}R_{N}^{d},$ and
so is asymptotically dominated by Term1$_{N}$. Then  similar calculations
show that 
\begin{equation}
\left\Vert \mathcal{L}_{\mathcal{D}_{N}}\right\Vert _{L^{\infty}\left(
\mathcal{D}_{N}\right)  }+\frac{\left\Vert \mathcal{L}_{\mathcal{D}_{N}%
}\right\Vert _{L^{1}\left(  \mathbb{R}^{d}\setminus\mathcal{D}_{N}\right)  }%
}{\lambda\left(  \mathcal{D}_{N}\right)  }\leq2\frac{1+2^{(d+3)/2}}%
{\sigma_{\min}^{d+2}\pi^{d/2}}R_{N}=:L_{\mathcal{D}_{N}}.\label{LN000}%
\end{equation}
By taking into account that $d_{\mathsf{S}}=d_{\mathsf{A}}=d,$ $\mathrm{DI}(\mathcal{D}_{d,N}\times\mathsf{A}_d%
)\lesssim (A+R_N) \sqrt{d},$ we then have
by~(\ref{LNa}), (\ref{LN0}), (\ref{LN00}), (\ref{LN000}) that%
\begin{align}
  \,\left(  L_{\mathcal{D}_{N}}\,\mathrm{DI}(\mathcal{D}_{d,N}\times\mathsf{A}_d%
)+L_{\mathcal{D}%
_{N}}\,\mathrm{diam}(\mathcal{D}_{N})\mathrm{diam}(\mathsf{A})+\Lambda\right)
/\delta_{\mathcal{D}_{N}}\label{LN1}
&  \lesssim L_{\mathcal{D}_{N}}\,\mathrm{diam}(\mathcal{D}_{N})\mathrm{diam}%
(\mathsf{A})/\delta_{\mathcal{D}_{N}}\nonumber\\
&  \lesssim8A\frac{1+2^{(d+3)/2}}{\sigma_{\min}^{d+2}\pi^{d/2}}\frac{R_{N}%
^{2}}{\delta_{\mathcal{D}_{N}}}\,\nonumber\\
&  \simeq2\gamma A\left(  1+2^{(2d+3)/2}\right)  
\\
& \times \left(  \sigma_{\max}%
/\sigma_{\min}\right)  ^{d}\exp(A^{2}/\sigma_{\min}^{2})\log N\cdot N^{\gamma
}\nonumber
\end{align}
 for $N\rightarrow\infty$.
Further we have%
\begin{equation}
\frac{\Lambda}{\delta_{\mathcal{D}_{N}}^{2}}\leq\left(  \sigma_{\max}%
/\sigma_{\min}\right)  ^{2d}\exp(2A^{2}/\sigma_{\min}^{2})N^{2\gamma
},\label{LN2}%
\end{equation}
which dominates (\ref{LN1}). That is, $$
C_{1}(H,\log N,d)= \left(
\sigma_{\max}/\sigma_{\min}\right)  ^{2d}\exp(2A^{2}/\sigma_{\min}^{2})$$ and
thus $\alpha:=2\gamma<1/2$  as required.
Next we bound
\[
\mathbb{P}_{\boldsymbol{\pi}}\Bigl(S_{h}^{(x_{0},0)}\notin\mathcal{D}%
_{N}\Bigr)\leq\sup_{a_{<h}\in\mathsf{A}^{h}}\int_{\mathbb{R}^{d}\setminus
B_{R_{N}}}p_{h}^{a_{<h}}(y)\,dy.
\]
Note that
\[
p_{h}^{a_{<h}}(y)=\frac{1}{\bigl(2\pi\bar{\sigma}_{h}^{2}\bigr)^{d/2}}%
\exp\left(  -|x_{0}-y-\bar{a}_{<h}|^{2}/\bigl(2\bar{\sigma}_{h}^{2}%
\bigr)\right)
\]
where $\bar{\sigma}_{h}^{2}=\sum_{l=1}^{h}\sigma_{l}^{2}$ and $\bar{a}%
_{<h}=\sum_{l=0}^{h-1}a_{l}.$ Suppose that $N$ is large enough such that
$\left\vert x_{0}\right\vert \leq R_{N}/4$ and $HA\leq R_{N}/4$ then
\begin{align*}
\mathbb{P}_{\boldsymbol{\pi}}\Bigl(S_{h}^{(x_{0},0)}\notin\mathcal{D}%
_{N}\Bigr) &  \leq\sup_{a_{<h}\in\mathsf{A}^{h}}\frac{1}{\bigl(2\pi\bar
{\sigma}_{h}^{2}\bigr)^{d/2}}\int_{\mathbb{R}^{d}\setminus B_{R_{N}}}%
\exp\left(  -|x_{0}-y-\bar{a}_{<h}|^{2}/\bigl(2\bar{\sigma}_{h}^{2}%
\bigr)\right)  \,dy\\
&  \leq\frac{1}{\bigl(2\pi\bigr)^{d/2}}\int_{|z|>R_{N}/(2\bar{\sigma}_{h}%
)}\exp\left(  -|z|^{2}/2\right)  \,dz\\
&  =\frac{\Gamma\left(  d/2,R_{N}^{2}/(8\bar{\sigma}_{h}^{2})\right)  }%
{\Gamma\left(  d/2\right)  }%
\end{align*}
where $\Gamma\left(  s,x\right)  $ denotes the incomplete Gamma function,
which has asymptotics $\Gamma\left(  s,x\right)  \simeq x^{s-1}e^{-x}$ for
$x\rightarrow\infty.$ By plugging in the choice for $R_{N}$ we get for
$N\rightarrow\infty,$%
\begin{align*}
\mathbb{P}_{\boldsymbol{\pi}}\Bigl(S_{h}^{(x_{0},0)}   \notin\mathcal{D}%
_{N}\Bigr)&\simeq\frac{8}{2^{3d/2}\Gamma(d/2)}(R_{N}/\bar{\sigma}_{h}%
)^{d-2}\exp\left(  -R_{N}^{2}/(8\bar{\sigma}_{h}^{2})\right)  \\
&  =\frac{32}{2^{5d/2}\Gamma(d/2)}(\frac{\sigma_{\min}}{\bar{\sigma}_{h}%
})^{\left(  d-2\right)  }\gamma^{\frac{d}{2}-1}N^{-\gamma\sigma_{\min}%
^{2}/\left(  32\bar{\sigma}_{h}^{2}\right)  }\log^{\frac{d}{2}-1}N\\
&  \leq\frac{32}{2^{5d/2}\Gamma(d/2)}\left(  \sqrt{H}\frac{\sigma_{\max}%
}{\sigma_{\min}}\right)  ^{\left(  2-d\right)  _{+}}\gamma^{\frac{d}{2}%
-1}N^{-\gamma\sigma_{\min}^{2}/\left(  32\bar{\sigma}_{H}^{2}\right)  }%
\log^{\frac{d}{2}-1}N
\end{align*}
with $(2-d)_{+}:=\max(2-d,0),$ uniform in $h\in]H]$. We so may take
$\beta=\gamma\sigma_{\min}^{2}/(32\bar{\sigma}_{H}^{2})$ and%
\[
C_{2}(H,\log N,d)=\frac{32H}{2^{5d/2}\Gamma(d/2)}\left(  \sqrt{H}\frac
{\sigma_{\max}}{\sigma_{\min}}\right)  ^{\left(  2-d\right)  _{+}}%
\gamma^{\frac{d}{2}-1}\log^{\frac{d}{2}-1}N.
\]
Thus, the conditions of Corollary~\ref{addcor} are satisfied with $\alpha=2\gamma$, $ \beta=\gamma\sigma_{\min}^{2}/(32\bar{\sigma}_{H}^{2}),$  $\eta=1/2$, and $\lambda=2\log
(\sigma_{\max}/\sigma_{\min})$, where $\gamma\in (0,1/4)$ can be further chosen to ensure that $\beta\leq 1/2-\alpha$, leading to a complexity bound  $\epsilon^{-2/\beta}\log^{d/\beta}(1/\epsilon)\times \text{polynomial}(H,d)$.

\section{Linear-quadratic Gaussian (LQG) control problems}
\label{sec:lqg}
Let us consider a classical stochastic linear-quadratic-Gaussian (LQG) control
problem for controlled $d$-dimensional diffusion process of the form
\begin{equation}
dX_{t}=2\sqrt{\lambda}\,m_{t}\,dt+\sqrt{2}\,dW_{t}\label{eq:sde}%
\end{equation}
with $t\in\lbrack0,T]$, $X_{0}=x_{0}\in\mathbb{R}^{d}$, and with the objective
functional
\[
J_{0}^{m}(x_{0})=\mathbb{E}_{m,x_{0}}\big[-\int_{0}^{T}\Vert m_{t}\Vert
^{2}\,dt+F(X_{T})\big].
\]
Here $(m_{t})_{t\in\lbrack0,T]}$ with $m_{t}\in\mathbb{R}^{d}$ is the adapted control
process and $F$
is the terminal reward if $F\geq0$ or terminal costs if $F<0$.
Further,
$\lambda$ is a positive constant representing the
\textquotedblleft strength\textquotedblright\ of the control, and
$(W_{t})_{t\in\lbrack0,T]}$ is a standard Brownian motion in $\mathbb{R}^{d}$.
Our goal is to maximize the functional $J_{0}^{m}(x_{0})$ over a class of
control processes $(m_{t})_{0\leq t\leq T}$. The HJB equation for the
problem at a generic time $t\in\lbrack0,T]$ and $x\in\mathbb{R}^{d},$ that is%
\[
J_{t}^{\star}(x):=\sup_{m}\mathbb{E}_{m,x}\big[-\int_{t}^{T}\Vert m_{s}%
\Vert^{2}\,ds+F(X_{T})\big]=\sup_{m}J_{t}^{m}(x),
\]
is given by
\begin{align}
\frac{\partial}{\partial t}J_{t}^{\star}(x)+\Delta J_{t}^{\star}%
(x)+\lambda\Vert\nabla J_{t}^{\star}(x)\Vert^{2}  & =0,\label{eq:PDE_HJB}\\
J_{T}^{\star}(x)  & =F(x)\nonumber
\end{align}
(see e.g., Yong \& Zhou~\cite[Chapter 4]{yong1999stochastic}) where
$J_{t}^{\star}(x)$ of \eqref{eq:PDE_HJB} at $t=0$ is the \textquotedblleft
optimal negative cost\textquotedblright\ when the state starts from $x$.
Using the Cole-Hopf transformation $x\to \exp(\lambda J_{t}^{\star}(x))$   one transforms the nonlinear PDE \eqref{eq:PDE_HJB} to the backward heat equation.  As a result, the  solution of
\eqref{eq:PDE_HJB} admits the explicit formula
\begin{equation}
J_{t}^{\star}(x)=\frac{1}{\lambda}\log\!\bigg(\mathbb{E}\Big[\exp\!\Big(\lambda
F(x+\sqrt{2}W_{T-t})\Big)\Big]\bigg).\label{eq:HJB_formula}%
\end{equation}
This can be used to test the accuracy of the proposed algorithm.

In our implementation, we first discretize the equation \eqref{eq:sde} using the 
Euler scheme with time step $\Delta$,
\[
S_{h+1}=S_{h}+2\sqrt{\lambda}m_{h\Delta}\Delta+\sqrt{\Delta}\,\varepsilon
_{h+1},\quad h\in\lbrack H[
\]
with $H=[T/\Delta],$ $\varepsilon_{h+1}\sim\mathcal{N}(0,I_{d})$ and
$S_{0}=x_{0}$. We then consider the discrete time controlled Markov chain%
\begin{equation}
S_{h+1}=S_{h}+a_{h}+\sqrt{\Delta}\,\varepsilon_{h+1},\quad h\in\lbrack
H[,\label{eq:discr-lqg}%
\end{equation}
by taking as control at time $h,$
\[
a_{h}:=2\sqrt{\lambda}m_{h\Delta}\Delta\in\lbrack-A,A]^{d}\text{ \ \ for some
}A>0.
\]
As such the conditional density of the Markov chain \eqref{eq:discr-lqg} is
Gaussian and of the form \eqref{eq:gauss-trans-dens} with $\sigma_{h}%
^{2}=\Delta$ for every $h.$ Thus the objective is to maximize the functional%
\[
V_{0}^{\boldsymbol{\pi}}(x_{0})=\mathbb{E}_{\boldsymbol{\pi},x_{0}}\left[
-\frac{1}{4\lambda\Delta}\sum_{k=0}^{H-1}\Vert\pi_{k}(S_{k})\Vert^{2}%
+F(S_{H})\right]
\]
over all policies $\boldsymbol{\pi}=(\pi_{k}(S_{k}))_{k\in\lbrack H[},$ where
$\pi_{k}:$ $\mathbb{R}^{d}\rightarrow\lbrack-A,A]^{d}.$ The optimal value of
the objective as seen from a generic time $h$ with starting point $S_{h}%
=x\in\mathbb{R}^{d}$ is given by
\[
V_{h}^{\star}(x)=\sup_{\pi_{h},\ldots\pi_{H-1}}\mathbb{E}_{\boldsymbol{\pi}%
,x}\left[  \left.  -\frac{1}{4\lambda\Delta}\sum_{k=h}^{H-1}\Vert\pi_{k}%
(S_{k})\Vert^{2}+F(S_{H})\right\vert S_{h}=x\right]  ,
\]
and satisfies the backward dynamic program
\[
V_{h}^{\star}(x)=\max_{a\in\lbrack-A,A]^{d}}\Bigl(-\frac{\Vert a\Vert^{2}%
}{4\lambda\Delta}+\mathbb{E}\left[  V_{h+1}^{\star}(x+a+\sqrt{\Delta
}\,\varepsilon_{h+1})\right]  \Bigr),\quad h=H-1,\ldots,0,
\]
with $V_{H}^{\star}(x)=F(x).$ 
\par
In our numerical experiments, we take 
\[
F(x)=\pm\log((1+\Vert x\Vert^{2})/2),%
\]
$T=0.2$ and $\Delta=0.01,$ hence   $H=20.$ Actions are
sampled uniformly on $[-1,1]^d$ and the optimization is performed over the resulting grid. The representative controls $b_0,\ldots,b_{H-1}$ are all taken to be zero. The results for
dimension $d=1$ are presented in Table~\ref{tab:d11} and Table~\ref{tab:d12}. They are obtained using a grid  of $50$  actions.  The value of  the explicit formula \eqref{eq:HJB_formula} is approximated using MC with $10000$ samples. 
\begin{table}
    \caption{Results for  $F(x)=- \log ((1+\|x\|^2)/2)$ and $d=1.$
            The explicit formula \eqref{eq:HJB_formula} gives $0.4542$. \label{tab:d11}}

            \begin{tabular}{ |p{3cm}|p{3cm}|p{4cm}|p{3cm}|}
            \hline
            mean & bias & standard deviation & number of trajectories\\
            \hline
            \hline
            0.464 & 0.034 & 0.044 & 10 \\
            0.459 & 0.007 & 0.009 & 100\\
            0.451 &  0.007 & 0.009 & 200 \\
            0.451 & 0.004 & 0.004 & 500 \\
            \hline 
            \end{tabular} 
\end{table}
\begin{table}
    \caption{Results for  $F(x)=\log ((1+\|x\|^2)/2)$ and $d=1.$
            The explicit formula \eqref{eq:HJB_formula} gives $-0.357$. \label{tab:d12}} 
            \begin{tabular}{ |p{3cm}|p{3cm}|p{4cm}|p{3cm}|}
            \hline
            mean & bias & standard deviation & number of trajectories\\
            \hline
            \hline
            -0.416 & 0.096 & 0.077 & 10 \\
            -0.377 & 0.021 & 0.016 & 100\\
            -0.373 &  0.014 & 0.011 & 200 \\
            -0.369 & 0.008 & 0.006 & 500 \\
            \hline 
            \end{tabular} 
\end{table}
The results for
dimension $d=5$ are presented in Table~\ref{tab:d51} and Table~\ref{tab:d52}, and obtained using a grid  of $400$  actions. 
\begin{table}
    \caption{Results for  $F(x)=-\log ((1+\|x\|^2)/2)$ and $d=5.$
            The explicit formula \eqref{eq:HJB_formula} gives $-0.2474$. \label{tab:d51}} 
             \begin{tabular}{ |p{3cm}|p{3cm}|p{4cm}|p{3cm}|}
                \hline
                mean & bias & standard deviation & number of trajectories\\
                \hline
                \hline
                -0.19 & 0.12 & 0.15 & 10 \\
                -0.22 & 0.026 & 0.034 & 100\\
                -0.21 &  0.011 & 0.015 & 200 \\
                -0.23 & 0.011 & 0.013 & 500 \\
                \hline 
            \end{tabular} 
\end{table}
\begin{table}
    \caption{Results for  $F(x)=\log ((1+\|x\|^2)/2)$ and $d=5.$ The explicit formula \eqref{eq:HJB_formula} gives $0.4054$. \label{tab:d52}}
            \begin{tabular}{ |p{3cm}|p{3cm}|p{4cm}|p{3cm}|}
                \hline
                mean & bias & standard deviation & number of trajectories\\
                \hline
                \hline
                0.378 & 0.13 & 0.17 & 10 \\
                0.343 & 0.029 & 0.038 & 100\\
                0.337 &  0.014 & 0.018 & 200 \\
                0.337 & 0.013 & 0.016 & 500 \\
            \hline 
            \end{tabular} 
\end{table}
Let us further study the performance of our algorithm for different values of the parameter $\lambda.$ Figure~\ref{fig:lambda} shows the estimates of $V^\star_0(0)$ (red line) together with the values obtained from \eqref{eq:HJB_formula} using MC with $10000$ paths (green line) for the case $F(x)=\log ((1+\|x\|^2)/2)$ and $d=1.$
\begin{figure}
    \centering
\includegraphics[width=0.7\textwidth]{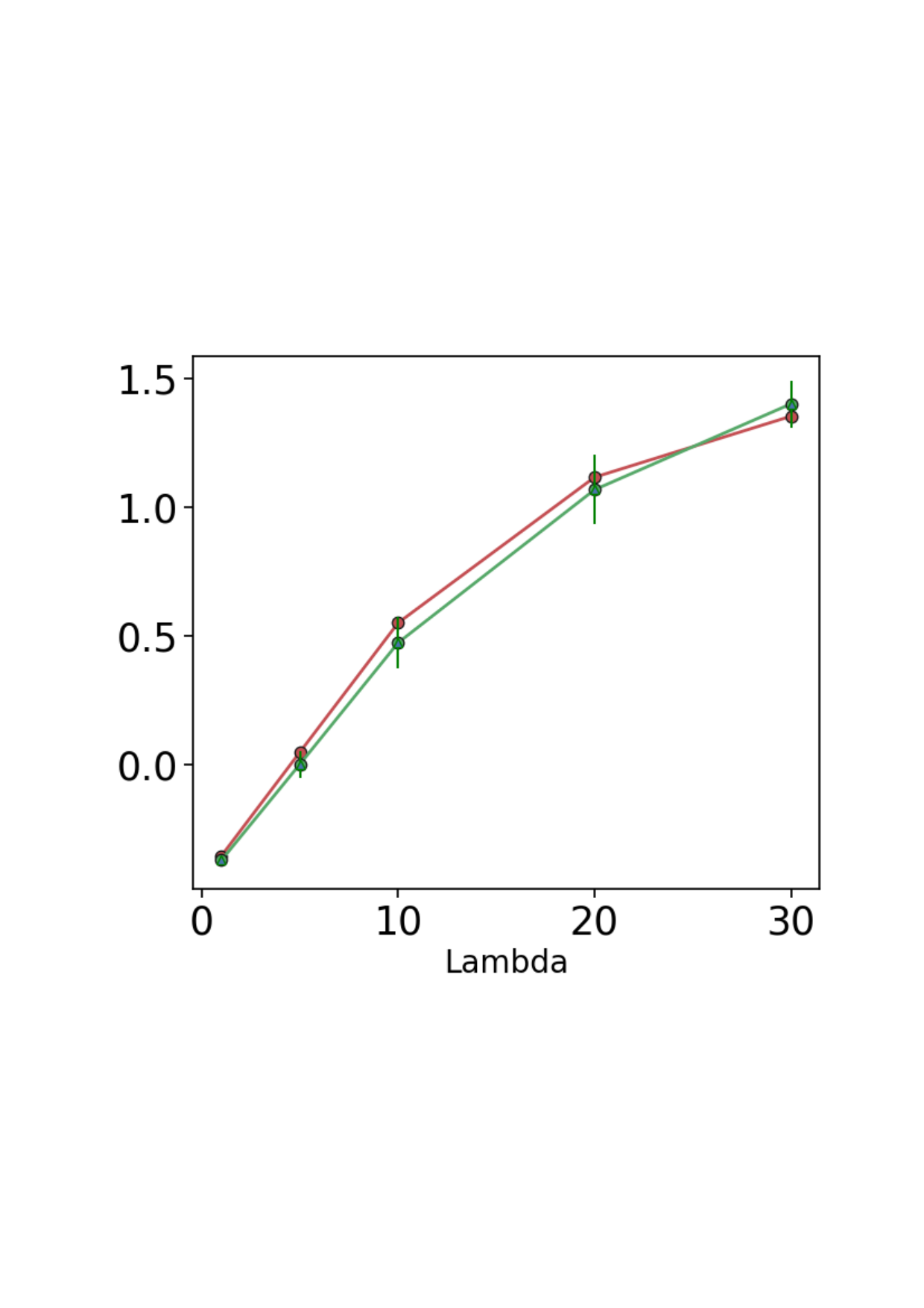}
    \caption{Results for different values of the parameter $\lambda$  and $F(x)=\log((1+|x|)^2/2)$.\label{fig:lambda}}
   
\end{figure}

\section{Proofs}
\subsection{Proof of Proposition~\ref{prop:noncomp-compl}}

Let us start with a simple observation. If $x=x(t)$ satisfies the equation%
\begin{equation}
\frac{x}{\log^{b}x}=t,\text{ \ \ }t>0,\text{ }b>0,\text{\ \ }\label{aseq}%
\end{equation}
one then has for $t\uparrow\infty,$%
\begin{equation}
x(t)=(1+o(1))t\log^{b}t.\label{aseqsol}%
\end{equation}
This is easily seen as follows. Clearly $x\uparrow\infty$ when $t\uparrow
\infty.$ Then, by setting $z=\log x$ and $s=\log t$ we get $e^{z}/z^{b}%
=e^{s},$ and we so may write%
\[
z\left(  1-b\frac{\log z}{z}\right)  =s
\]
where $z\uparrow\infty$ when $s\uparrow\infty.$ Since $z^{-1}\log z=o(1)$ for
$s\uparrow\infty$ we conclude that $z=(1+o(1))s,$ for $s\uparrow\infty,$ and
hence%
\begin{equation}
\log x=(1+o(1))\log t,\text{ \ \ }t\uparrow\infty.\label{logsol}%
\end{equation}
Next, substituting (\ref{logsol}) in (\ref{aseq}) yields (\ref{aseqsol}). 

Now suppose we
need to achieve an accuracy $\epsilon>0.$ 
The
assumptions imply that it is sufficient to choose $N$
large enough such that%
\begin{gather*}
GcH^{2+q}N^{\alpha-\frac{1}{2}}\log^{q}N\leq\frac{\epsilon}{2}\text{
\ \ and \ \ }GcH^{1+q}N^{-\beta}\log^{q}N\leq\frac{\epsilon}{2},\text{
\ \ or}\\
\frac{N}{\log^{2q/(1-2\alpha)}N}\geq\left(  \frac{2GcH^{2+q}}{\epsilon
}\right)  ^{2/(1-2\alpha)}\text{ and\ }\frac{N}{\log^{q/\beta}N}\geq\left(
\frac{2GcH^{1+q}}{\epsilon}\right)  ^{1/\beta},
\end{gather*}
where $c\equiv c_d$ and $q\equiv q_d$.
By considering equalities instead of inequalities we get equations of the form
(\ref{aseq}) with asymptotic solutions due to (\ref{logsol}),%
\begin{align*}
N &  =\left(  1+o(1)\right)  \left(  \frac{2^{q+1}GcH^{2+q}}{\left(
1-2\alpha\right)  ^{q}}\right)  ^{2/(1-2\alpha)}\frac{\log^{2q/(1-2\alpha
)}\frac{1}{\epsilon}}{\epsilon^{2/(1-2\alpha)}}\qquad\text{and}\\
N &  =(1+o(1))\left(  \frac{2GcH^{1+q}}{\beta^{q}}\right)  ^{1/\beta}%
\frac{\log^{q/\beta}\frac{1}{\epsilon}}{\epsilon^{1/\beta}},\text{ \ \ for
\ \ }\epsilon\downarrow0,
\end{align*}
respectively.
We thus end up with a complexity $\mathcal{C}\left(
\epsilon\right)  =HN^2$ which is bounded by (\ref{noncompcomp0}).

\newpage
\appendix

\section{Some auxiliary notions}\label{sec:emp}
The Orlicz 2-norm of a real valued random variable $\eta$ with respect to the
function $\psi_2(x)=e^{x^{2}}-1$, $x\in\mathbb{R}$, is defined by $\Vert
\eta\Vert_{\psi_2}:=\inf\{t>0:\mathbb{E}\left[  {\exp}\left(  {\eta
^{2}/t^{2}}\right)  \right]  \leq2\}$. We say that $\eta$ is
\emph{sub-Gaussian} if $\Vert\eta\Vert_{\psi_2}<\infty$. In particular,
this implies that for some constants $C,c>0$,
\[
\mathbb{P}(|\eta|\geq t)\leq2\exp\left(  -\frac{ct^{2}}{\Vert\eta
\Vert_{\psi_2}^{2}}\right)  \text{\ \ and \ \ }\mathbb{E}[|\eta|^{p}%
]^{1/p}\leq C\sqrt{p}\Vert\eta\Vert_{\psi_2}\text{ \ \ for all \ \ }p\geq1.
\]
Consider a real valued random process $(X_{t})_{t\in\mathcal{T}}$ on a metric
parameter space $(\mathcal{T},\mathsf{d})$. We say that the process has
\emph{sub-Gaussian increments} if there exists $K\geq0$ such that
\[
\Vert X_{t}-X_{s}\Vert_{\psi_2}\leq K\mathsf{d}(t,s),\quad\forall
t,s\in\mathcal{T}.
\]
Let $(\mathsf{Y},\rho)$ be a metric space and $\mathsf{X\subseteq Y}$. For
$\varepsilon>0$, we denote by $\mathcal{N}(\mathsf{X},\rho,\varepsilon)$ the
covering number of the set $\mathsf{X}$ with respect to the metric $\rho$,
that is, the smallest cardinality of a set (or net) of $\varepsilon$-balls in
the metric $\rho$ that covers $\mathsf{X}$. Then $\log\mathcal{N}%
(\mathsf{X},\rho,\varepsilon)$ is called the metric entropy (or Dudley integral) of $\mathsf{X}$
and
\[
\mathrm{DI}(\mathsf{X}):=\int_{0}^{\operatorname{diam}(\mathsf{X})}\sqrt
{\log\mathcal{N}\bigl(\mathsf{X},\rho,u\bigr)}\,du
\]
with $\operatorname{diam}(\mathsf{X}):=\max
_{x,x^{\prime}\in\mathsf{X}}\rho(x,x^{\prime}),$ is called the Dudley
integral. For example, if $|\mathsf{X}|<\infty$ and $\rho(x,x^{\prime
})=1_{\{x\neq x^{\prime}\}}$ we get $\mathrm{DI}(\mathsf{X})=\sqrt
{\log|\mathsf{X}|}.$

\section{Estimation of mean uniformly in parameter}

The following proposition holds.
\begin{proposition}
\label{prop:unif-exp}
Let \(f\) be a function on \(\mathsf{X}\times \Xi\) such that
\begin{eqnarray}
\left|f(x,\xi)-f(x',\xi)\right|\leq L\rho(x,x')
\end{eqnarray}
with some constant \(L>0.\) Furthermore assume that \(\|f\|_{\infty}\leq F<\infty\) for some \(F>0.\) Let \(\xi_n,\) \(n=1,\ldots,N,\) be i.i.d. sample from a distribution on \( \Xi.\) Then we have
\begin{eqnarray*}
\mathbb{E}^{1/p}\left[\sup_{x\in \mathsf{X} }\left\vert \frac{1}{N}\sum_{n=1}^{N}\left(f(x,\xi_n)-\mathbb{E}f(x,\xi_n)\right)\right\vert ^{p}\right]\lesssim \frac{L\, \mathrm{DI}(\mathsf{X}) +   (L\, \mathrm{diam}(\mathsf{X})  +  F) \sqrt{p}}{\sqrt{N}},
\end{eqnarray*}
where $\lesssim$ may be interpreted as $\le$ up to a
natural constant.
\end{proposition}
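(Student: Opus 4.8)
The plan is to recognize the left-hand side as the $p$-th moment of the supremum of a centered empirical process and to control it by generic chaining, i.e.\ by the high-moment form of Dudley's inequality for processes with sub-Gaussian increments. First I would introduce the process
\[
X_x := \frac{1}{N}\sum_{n=1}^{N}\bigl(f(x,\xi_n)-\mathbb{E}f(x,\xi_n)\bigr),\qquad x\in\mathsf{X},
\]
so that the quantity to estimate is $\mathbb{E}^{1/p}[\sup_{x}|X_x|^p]$. The key structural fact is that $(X_x)_{x\in\mathsf{X}}$ has \emph{sub-Gaussian increments} with respect to the rescaled metric $\tfrac{L}{\sqrt N}\rho$ in the sense of Appendix~\ref{sec:emp}. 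Indeed, for $x,x'\in\mathsf{X}$ we have $X_x-X_{x'}=\tfrac1N\sum_n Z_n$, where $Z_n:=\bigl(f(x,\xi_n)-f(x',\xi_n)\bigr)-\mathbb{E}\bigl(f(x,\xi_n)-f(x',\xi_n)\bigr)$ is centered and, by the Lipschitz hypothesis, satisfies $|Z_n|\le 2L\rho(x,x')$. Hoeffding's lemma then yields $\|Z_n\|_{\psi_2}\lesssim L\rho(x,x')$, and since the $Z_n$ are i.i.d.\ and centered, the $\psi_2$-norm of their normalized sum obeys $\|X_x-X_{x'}\|_{\psi_2}\lesssim \tfrac{L}{\sqrt N}\rho(x,x')$.

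Next I would fix a reference point $x_0\in\mathsf{X}$ and split
\[
\sup_{x}|X_x|\le |X_{x_0}|+\sup_{x}|X_x-X_{x_0}|.
\]
The first term is a normalized sum of centered random variables bounded by $2F$, hence $\|X_{x_0}\|_{\psi_2}\lesssim F/\sqrt N$, and the sub-Gaussian moment bound of Appendix~\ref{sec:emp} gives $\mathbb{E}^{1/p}[|X_{x_0}|^p]\lesssim \sqrt p\,F/\sqrt N$, which produces the $F\sqrt p$ contribution. For the increment term I would invoke the $L^p$ version of Dudley's entropy bound: constructing successive nets at the dyadic scales $2^{-k}\mathrm{diam}(\mathsf{X})$, controlling each chaining link through its $\psi_2$-norm, taking union bounds level by level and integrating the resulting tail, one obtains
\[
\mathbb{E}^{1/p}\Big[\sup_{x}|X_x-X_{x_0}|^p\Big]\lesssim \frac{L}{\sqrt N}\Big(\int_{0}^{\mathrm{diam}(\mathsf{X})}\sqrt{\log\mathcal{N}(\mathsf{X},\rho,u)}\,du+\sqrt p\,\mathrm{diam}(\mathsf{X})\Big),
\]
where the integral is exactly $\mathrm{DI}(\mathsf{X})$ and the $\sqrt p\,\mathrm{diam}(\mathsf{X})$ correction arises from the coarsest level of the chain. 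Summing the two contributions gives the claimed bound, with the grouping $(L\,\mathrm{diam}(\mathsf{X})+F)\sqrt p$ reflecting that the two $\sqrt p$ terms come from the top chaining scale and the single-point fluctuation, respectively.

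The main obstacle, and the only step beyond routine verification, is the high-moment form of Dudley's inequality. The classical statement only bounds the expectation $\mathbb{E}[\sup_x|X_x-X_{x_0}|]$ by the entropy integral; here I need the $p$-th moment together with the correct linear-in-$\sqrt p$ diameter term. This can be obtained either by citing the generic-chaining tail inequality (the high-probability version of Dudley's theorem, which integrates over all $p\ge1$) or by running the chaining argument directly and carefully tracking the $p$-dependence of the union bounds at each dyadic scale. The delicate points are ensuring that the proportionality constant is absolute and that the diameter term enters exactly as $\sqrt p\,\mathrm{diam}(\mathsf{X})$ rather than with an extra entropy factor.
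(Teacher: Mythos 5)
Your proposal is correct and follows essentially the same route as the paper's proof: the same centered empirical process with sub-Gaussian increments of order $L\rho(x,x')$ (via Hoeffding and the independent-sum property, as in Vershynin's Proposition~2.6.1), the same split $\sup_x|X_x|\le|X_{x_0}|+\sup_x|X_x-X_{x_0}|$ around a reference point, the high-probability Dudley bound with the $\mathrm{diam}(\mathsf{X})\sqrt{\log(2/\delta)}$ correction (Vershynin's Theorem~8.1.6) for the increment term, and Hoeffding for $|X_{x_0}|$. If anything, you are slightly more explicit than the paper, which stops at the two tail bounds and leaves the integration over $\delta$ yielding the $\sqrt{p}$-moment form implicit.
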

\begin{proof}
Denote
\[
Z(x):=\frac{1}{\sqrt{N}}\sum_{n=1}^{N}\left(f(x,\xi_n)-M_{f}(x)\right)
\]
with $M_{f}(x)=\mathbb{E}[f(x,\xi)],$ that is, $Z(x)$ is
a centered random process on the metric space $(\mathsf{X},\rho)$.
Below we show that the process $Z(x)$ has sub-Gaussian increments.
In order to show it, let us introduce
\[
Z_{n}=f(x,\xi_n)-M_{f}(x)-f(x',\xi_n)+M_{f}(x').
\]
Under our assumptions we get
\[
\|Z_{n}\|_{\psi_{2}}\lesssim L\rho(x,x'),
\]
that is, $Z_{n}$ is sub-Gaussian for any $n=1,\ldots,N.$ Since
\[
\ensuremath{Z(x)-Z(x')=N^{-1/2}\sum_{n=1}^{N}Z_{n}},
\]
is a sum of independent sub-Gaussian r.v, we may apply \cite[Proposition 2.6.1 and Eq. (2.16)]{Vershynin}) to obtain that
$Z(x)$ has sub-Gaussian increments with parameter $K\asymp L$. Fix
some $x_0\in \mathsf{X}.$ By the triangular inequality,
\[
\sup_{x\in\mathsf{X}}|Z(x)|\le\sup_{x,x'\in\mathsf{X}}|Z(x)-Z(x')|+\left|Z(x_0)\right|.
\]
By the Dudley integral inequality, e.g. \cite[Theorem
8.1.6]{Vershynin}, for any $\delta\in(0,1)$,
\[
\sup_{x,x'\in\mathsf{X}}|Z(x)-Z(x')|\lesssim L\bigl[\mathrm{DI}(\mathsf{X})+\mathrm{diam}(\mathsf{X})\sqrt{\log(2/\delta)}\bigr]
\]
holds with probability at least $1-\delta$. Again, under our assumptions,
$Z(x_0)$ is a sum of i.i.d. bounded centered random variables with
$\psi_{2}$-norm bounded by $F$. Hence, applying Hoeffding's inequality,
e.g. \cite[Theorem 2.6.2.]{Vershynin}, for any
$\delta\in(0,1)$,
\[
|Z(x_0)|\lesssim F\sqrt{\log(1/\delta)}.
\]
\end{proof}

\section{Heuristic motivation for approximation (\ref{appr})}\label{heurw}
Let us give a heuristic motivation for the choice of the weights in approximation  (\ref{appr}). First
consider that for large $N$,%
\begin{equation}
\frac{1}{N-1}\sum_{k^{\prime}=1,\,k^{\prime}\neq n^{\prime}}^{N}p_{h+1}%
^{b_{h}}(S_{h+1}^{(n^{\prime)}}|S_{h}^{(k^{\prime})})\approx\int
p_{h+1}^{b_{h}}(S_{h+1}^{(n^{\prime)}}|z)p_{h}^{_{b_{<h}}}(z)dz=p_{h+1}%
^{_{b_{<h+1}}}(S_{h+1}^{(n^{\prime)}}),\label{apprheu}%
\end{equation}
and so for large $N$ the denominator of the weight $w_{h,n,N}$ in (\ref{appr}) becomes%
\begin{align*}
&  \sum_{n^{\prime}=1}^{N}\frac{p_{h+1}^{a}(S_{h+1}^{(n^{\prime})}|x)}%
{\sum_{k^{\prime}=1,\,k^{\prime}\neq n^{\prime}}^{N}p_{h+1}^{b_{h}}%
(S_{h+1}^{(n^{\prime)}}|S_{h}^{(k^{\prime})})}\approx\frac{1}{N-1}%
\sum_{n^{\prime}=1}^{N}\frac{p_{h+1}^{a}(S_{h+1}^{(n^{\prime})}|x)}%
{p_{h+1}^{_{b_{<h+1}}}(S_{h+1}^{(n^{\prime)}})}\\
&  \approx\frac{N}{N-1}\int\frac{p_{h+1}^{a}(z^{\prime}|x)}{p_{h+1}%
^{_{b_{<h+1}}}(z^{\prime})}p_{h+1}^{_{b_{<h+1}}}(z^{\prime})dz^{\prime}%
=\frac{N}{N-1}.%
\end{align*}
This in turn means that for any bounded $f$ on $\mathsf{S}$ and large $N,$ by
using (\ref{apprheu}) once again,%
\begin{align*}
&  \sum_{n=1}^{N}f_{h+1}(S_{h+1}^{(n)})w_{h,n,N}(x,a)\\
&  \approx\frac{1}{N}\sum_{n=1}^{N}f_{h+1}(S_{h+1}^{(n)})\frac{p_{h+1}%
^{a}(S_{h+1}^{(n)}|x)}{\frac{1}{N-1}\sum_{k=1,\,k\neq n}^{N}p_{h+1}^{b_{h}%
}(S_{h+1}^{(n)}|S_{h}^{(k)})}\\
&  \approx\frac{1}{N}\sum_{n=1}^{N}\frac{f_{h+1}(S_{h+1}^{(n)})p_{h+1}%
^{a}(S_{h+1}^{(n)}|x)}{p_{h+1}^{_{b_{<h+1}}}(S_{h+1}^{(n^{)}})}\approx
\int\frac{f_{h+1}(z)p_{h+1}^{a}(z|x)}{p_{h+1}^{_{b_{<h+1}}}(z)}p_{h+1}%
^{_{b_{<h+1}}}(z)dz\\
&  =\int f_{h+1}(z)p_{h+1}^{a}(z|x)dz=\mathbb{E}_{S_{h+1}\sim P_{h+1}%
(\cdot|x,a)}\left[  f_{h+1}(S_{h+1})\right]  .
\end{align*}

\section{Reflection of Markov chains}
\label{sec:refl}

Let $\left(  S_{h}\right)  _{h=0,\ldots,H}$ be a Markov chain in
$\mathbb{R}^{d}$ with one-step transition density $p_{h+1}(y|x)$ for $0\leq
h<H.$ Let further $\mathcal{D\subset}$ $\mathbb{R}^{d}$ be a compact Borel
subset and $q(dy)=\lambda(dy)/\lambda(\mathcal{D})$ with $\lambda$ being
Lebesgue measure on $\mathbb{R}^{d}$. We then construct a Markov chain
$\left(  S_{h}^{\mathcal{D}}\right)  _{h=0,\ldots,H}$ in $\mathcal{D}$ as
follows: Suppose $S_{h}^{\mathcal{D}}=x\in\mathcal{D}$. Let $Y\in
\mathbb{R}^{d}$ be a random variable with density $p_{h+1}(y|x)$ and
$Q\in\mathcal{D}$ be a random variable independent of $Y$ with density
$\lambda^{-1}(\mathcal{D}),$ hence $Q$ is uniformly distributed on
$\mathcal{D}$. We then define%
\[
S_{h+1}^{\mathcal{D}}:=\left\{
\begin{tabular}
[c]{l}%
$Y$ \ \ if \ \ $Y\in\mathcal{D}$\\
$Q$ \ \ if \ \ $Y\notin\mathcal{D}$%
\end{tabular}
\right.  .
\]
For any non-negative Borel function $f$ in $\mathbb{R}^{d}$ with support
$\mathcal{D}$ one thus has%
\begin{align*}
\mathbb{E}\left[  f(S_{h+1}^{\mathcal{D}})\right]    & =\mathbb{E}\left[
f(Y)1_{\left\{  Y\in\mathcal{D}\right\}  }\right]  +\mathbb{E}\left[
f(Q)1_{\left\{  Y\notin\mathcal{D}\right\}  }\right]  \\
& =\int_{\mathcal{D}}f(y)p_{h+1}(y|x)dy+\frac{1}{\lambda(\mathcal{D})}%
\int_{\mathcal{D}}f(y)dy\int_{\mathbb{R}^{d}\setminus\mathcal{D}}%
p_{h+1}(z|x)dz.
\end{align*}
Hence, $S_{h+1}^{\mathcal{D}}$ is governed by the one-step transition density
$p_{h+1}^{\mathcal{D}}$ on $\mathcal{D}\times\mathcal{D}$ given by%
\[
p_{h+1}^{\mathcal{D}}(y|x)=p_{h+1}(y|x)+\frac{1}{\lambda(\mathcal{D})}%
\int_{\mathbb{R}^{d}\setminus\mathcal{D}}p_{h+1}(z|x)dz,\text{ \ \ }%
x,y\in\mathcal{D}.
\]
Consider furthermore the stopping time%
\[
\tau^{\mathcal{D}}:=\min\left\{  h:S_{h}\notin\mathcal{D}\right\}  \text{
\ \ with \ \ }S_{0}=x_{0}\in\mathcal{D}\text{.}%
\]
It is not difficult to see that one has that
\begin{equation}
\left(  S_{h}:0\leq h<\tau^{\mathcal{D}}\right)  \overset{\mathcal{L}}%
{=}\left(  S_{h}^{\mathcal{D}}:0\leq h<\tau^{\mathcal{D}}\right)
,\label{idlaw}%
\end{equation}
Loosely speaking, $S_{h}^{\mathcal{D}}$ behaves like $S_{h}$ before the first
exit of the set $\mathcal{D}$. We will say that $S_{h}^{\mathcal{D}}$ is the
\textit{reflected Markov chain} obtained from reflecting the process $S_{h}$
in $\mathcal{D}$.

\section*{Declarations}

\noindent\textbf{Conflict of interest. } The authors have no competing interests to declare that are relevant to the content of this article.
\\
\noindent\textbf{Data availability. } Data sharing is not applicable to this article as no datasets were generated or analyzed during the current study.

\subsection*{Acknowledgments}
D.B. and V.Z. gratefully acknowledge financial support from the German science foundation (DFG), grant Nr.497300407. 
J.S. gratefully acknowledges financial support from the German science foundation (DFG) via the
cluster of excellence MATH+, project AA4-2.

\bibliography{biblio-rl,stop}
\bibliographystyle{plain}

\end{document}